\documentclass[11pt,final]{article}
\usepackage{amsthm,amsfonts,graphicx,hyperref,tikz,enumerate,psfrag}
\usepackage{fullpage}

\usepackage[utf8]{inputenc} 
\RequirePackage{amsthm,amsmath,amsfonts,tikz}  

\newtheorem{definition}{Definition}
\newtheorem{lemma}{Lemma}
\newtheorem{remark}{Remark}
\newtheorem{proposition}[lemma]{Proposition}
\newtheorem{theorem}[lemma]{Theorem}

\newcommand{\EE}{{\mathbf{E}}}
\newcommand{\VV}{{\mathbf{Var}}}
\newcommand{\PP}{{\mathbf{P}}}
\newcommand{\RR}{{\mathbb{R}}}

\newcommand{\D}{\,\rm{d}}

\newcommand{\dN}{\mathbb {N}}
\newcommand{\dR}{\mathbb {R}}

\newcommand{\dL}{\mathbb {L}}
\newcommand{\cW}{\mathcal {W}}
\newcommand{\cF}{\mathcal {F}}
\newcommand{\cB}{\mathcal {B}}
\newcommand{\cT}{\mathcal {T}}
\newcommand{\cQ}{\mathcal {Q}}
\newcommand{\cI}{\mathcal {I}}
\newcommand{\cJ}{\mathcal {J}}

\newcommand{\cO}{\mathcal {O}}

\newcommand{\cE}{\mathcal {E}}
\newcommand{\cG}{\mathcal {G}}

\newcommand{\cL}{\mathcal {L}}
\newcommand{\cP}{\mathcal {P}}
\newcommand{\cD}{\mathcal {D}}
\newcommand{\w}{\mathbf{w}}
\newcommand{\wm}{{w_{\rm{min}}}}
\newcommand{\wpi}{{\pi_{h}^{-}}}

\newcommand{\h}{\mathbf{h}}

\newcommand{\wcD}{\widetilde{\cD}}

 %
 %

\newcommand{\veps}{\varepsilon}

\definecolor{darkblue}{rgb}{0,0.3,0.9}

\title{Random walk on sparse random digraphs}
\author{Charles Bordenave, Pietro Caputo, Justin Salez}

\date{September 27, 2015; revised April 27, 2017}

\begin{document}
\maketitle

\begin{abstract}
A finite ergodic Markov chain exhibits \emph{cutoff} if its distance to equilibrium remains close to its initial value over a certain number of iterations and then abruptly drops to near 0 on a much shorter time scale. Originally discovered in the context of card shuffling (Aldous-Diaconis, 1986), this remarkable phenomenon is now rigorously established for many reversible chains. Here we consider the non-reversible case of random walks on sparse directed graphs, for which even the equilibrium measure is far from being understood. We work under the configuration model, allowing both the in-degrees and the out-degrees to be freely specified. We establish the cutoff phenomenon, determine its precise window and prove that the cutoff profile approaches a universal shape. We also provide a detailed description of the equilibrium measure. 
\end{abstract}

\begin{figure}[h!]
\begin{center}
\includegraphics[angle =0,height = 6.5cm]{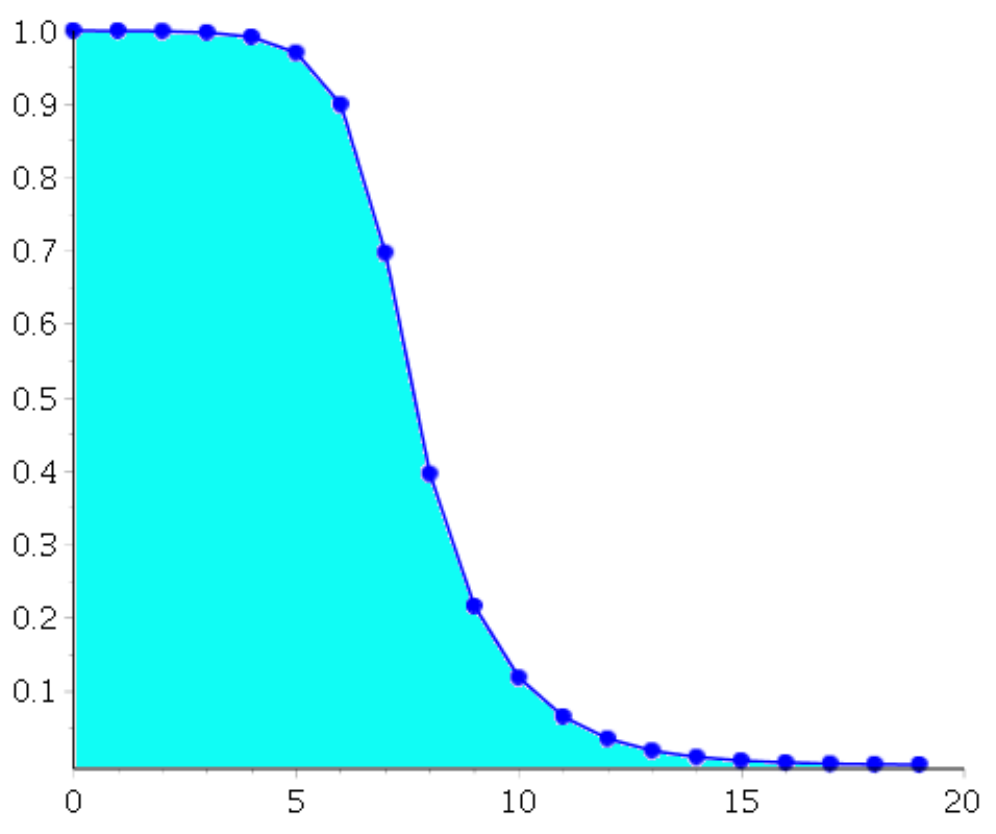}
\includegraphics[angle =0,height = 6.5cm]{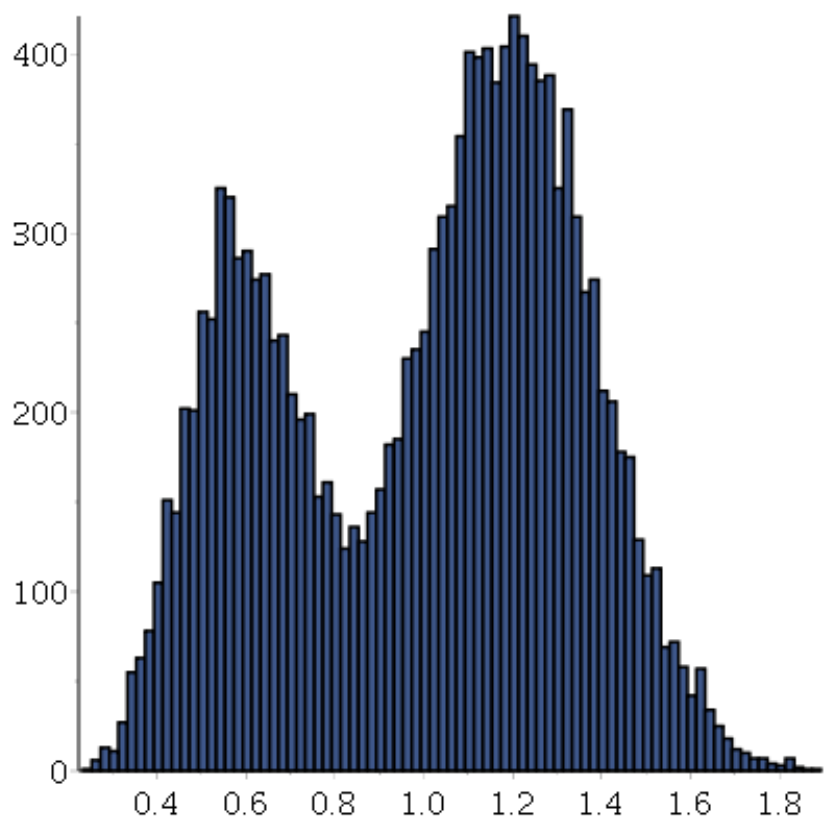}
\caption{Distance to equilibrium along time (left) and histogram of the vertex weights multiplied by $n$ under the equilibrium measure (right) for the random walk on a random digraph with $n=5000+5000+5000$ vertices with respective degrees $(d^+,d^-)=(3,2)$,  $(3,4)$ and  $(4,4)$.}
\label{fig:cutoff}
\end{center}
\end{figure}

\section{Introduction}
Given two sequences of positive integers $(d^-_i)_{1\leq i\leq n}$ and $(d^+_i)_{1\leq i\leq n}$ with equal sum $m$, we construct a directed multigraph $G$ with in-degrees $(d^-_i)_{1\leq i\leq n}$  and out-degrees  $(d^+_i)_{1\leq i\leq n}$ as follows. We formally equip each vertex $i\in V:=\{1,\ldots,n\}$ with a set $E^+_i$ of $d_i^+$ \emph{tails} and a set  $E^-_i$ of $d^-_i$ \emph{heads}. We then choose a tail-to-head bijection $\omega\colon \bigcup_{i}E_i^+\to\bigcup_{i}E_i^-$ (the \emph{environment}), and interpret each coordinate $\omega(e)=f$ as an arc $ef$ from the vertex of $e$ to that of $f$ (loops and multiple edges are allowed). Our interest is in the \emph{Random Walk} on the resulting directed graph (digraph) $G$, i.e. the discrete-time Markov chain with state space $V$ and transition matrix
\begin{eqnarray*}
P(i,j) & := & \frac{1}{d^+_i}\,\textrm{card}\left\{e\in E_i^+\colon \omega(e)\in E_j^-\right\}.
\end{eqnarray*}
Starting from $i\in V$, the probability that the walk is at $j\in V$ at time $t\in\dN$ can be expanded as
\begin{eqnarray}
\label{eq:expand}
P^t(i,j) & = & \sum_{\mathfrak p\in\cP^t_{ij}}\w(\mathfrak p),
\end{eqnarray}
where the sum ranges over all \emph{directed paths} $\mathfrak p$ of length $t$ from $i$ to $j$, i.e. sequences of arcs $\mathfrak p= (e_1f_1,\ldots,e_tf_t)$ with $e_k\in E^+_{i_{k-1}}$, $f_k\in  E^-_{i_k}$ for some sequence $i_k\in V$ such that $i_0=i$ and $i_t=j$, and where the \emph{weight}  is defined by 
\begin{eqnarray*}
\w(\mathfrak p) & := & \frac{1}{d^+_{i_0}\cdots d^+_{i_{t-1}}}.
\end{eqnarray*}
As long as $G$  strongly connected (in the sense that for every $i,j\in V$ there exists a directed path from $i$ to $j$), the classical theory  guarantees that there is a unique probability measure $\pi_\star$ on $V$ which is invariant, i.e.  $\pi_\star P=\pi_\star$ (see, e.g., the book  \cite{levin2009markov}). From every starting state $i\in V$, one may then consider the \emph{total-variation distance from equilibrium at time t}: 
\begin{eqnarray}
\label{def:distance}
\cD_i(t) & := & \|P^t(i,\cdot)-\pi_\star\|_{\textsc{tv}}\ = \ \frac 12\sum_{j\in V}\left|P^t(i,j)-\pi_\star(j)\right|\ \in \ [0,1].
\end{eqnarray}
The aim of this paper is to investigate the profile of the decreasing functions $t\mapsto \cD_i(t),i\in V$ under the \emph{configuration model}, i.e. when the environment $\omega$ is chosen uniformly at random from the $m!$ possible choices. This turns $G$, $P$, $\pi_\star$ and the $(\cD_i)_{i\in V}$ into random objects, parametrized by the degrees $(d^\pm_i)_{1\leq i\leq n}$. In order to study large-size asymptotics, we let all quantities implicitly depend on $n$ and consider the limit as $n\to\infty$. We restrict our attention to the \emph{sparse regime}, where
\begin{eqnarray}\label{assume:delta}
\delta:=\min_{1\leq i\leq n} d_i^\pm  \geq 2 & \textrm{ and } & \Delta:=\max_{1\leq i\leq n} d_i^\pm  =  \cO(1).
\end{eqnarray}
Note in particular that $m=\Theta(n)$. The requirement on $\delta$ guarantees that $G$ is strongly connected with high probability (see, e.g.,  \cite{MR2056402}). In particular, the equilibrium measure $\pi_\star$ is unique with high probability. Here and below, with high probability (w.h.p.) means with probability tending to $1$ as $n\to\infty$.  The mixing time of the walk turns out to be determined by a simple statistics, namely the mean logarithmic out-degree of the end-point of a uniformly chosen head. More precisely, define 
\begin{eqnarray*}\label{eq:mu}
\mu := \frac 1m\sum_{i=1}^nd_i^-\ln d_i^+ & \textrm{ and } & t_\star:=\frac{\ln n}{\mu}.
\end{eqnarray*}
Our first result is that $t_\star$ steps are necessary and sufficient for the walk to mix, regardless of the initial vertex. More precisely, as the number of iterations $t$ approaches $t_\star$, the distance from equilibrium undergoes a sharp transition  visible on Figure \ref{fig:cutoff} and known as a \emph{cutoff phenomenon}. 
\begin{theorem}[Cutoff at time $t_\star$] 
\label{th:cutoff}
For $t=\lambda t_\star+o(t_\star)$ with fixed $\lambda\geq 0$, we have 
\begin{eqnarray*}
\lambda < 1& \ \Longrightarrow \ & \min_{i\in V}\cD_i\left(t\right)  \ \xrightarrow[n \to\infty]{\PP} \ 1,\\
\lambda > 1& \ \Longrightarrow \ & \max_{i\in V}\cD_i\left(t\right)  \ \xrightarrow[n \to\infty]{\PP} \ 0,
\end{eqnarray*}
where $\xrightarrow[]{\PP}$ denotes convergence in probability. 
\end{theorem}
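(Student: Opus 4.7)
The cutoff time $t_\star=\ln n/\mu$ has an entropic origin: in $t$ steps the walk accumulates entropy $\sum_{k<t}\ln d^+_{X_k}$, whose mean converges to $t\mu$ because, in the annealed limit, the vertex at step $k\geq 1$ is size-biased by $d^-$ (an incoming half-edge is revealed uniformly among the $m$ available ones). When this entropy equals $\ln n$, i.e.\ $t=t_\star$, the walk's distribution saturates the full vertex set. The two directions of the theorem consist in making this heuristic quantitative and comparing the resulting distribution to $\pi_\star$, which must therefore be controlled simultaneously.

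\textbf{A priori control of $\pi_\star$.} Before either bound, I would establish that $\pi_\star$ is \emph{spread out}, namely $\max_{j\in V}\pi_\star(j)=\cO(1/n)$ and $\min_{j\in V}\pi_\star(j)=\Omega(1/n)$ w.h.p.\ To do so, represent $\pi_\star(j)$ as a convergent weighted sum of backward paths into $j$ (via the resolvent expansion of $\pi_\star P=\pi_\star$), truncate at depth $r\ll t_\star$ so that the truncation lives in the tree-like range of the configuration model, and compare it to the analogous quantity on the limiting (Galton-Watson) in-tree. A first-moment bound on the number of backward paths hitting a short cycle at depth $r$ shows that the truncation error is $o(1/n)$, and one concludes that the truncated sum has the correct order $\Theta(1/n)$ uniformly in $j$.

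\textbf{Lower bound.} For $t=(1-\varepsilon)t_\star$, use the path-weight decomposition (\ref{eq:expand}) and let $L_t:=-\ln \w(\mathfrak p_t)=\sum_{k<t}\ln d^+_{X_k}$ be the log-weight of the realised trajectory. Coupling the first $o(\ln n)$ steps of the walk with the annealed walk on the limiting branching process, and applying a law of large numbers along ergodic trajectories of that walk, gives $L_t/t\to\mu$ in probability, uniformly in the starting vertex $i\in V$ thanks to the bounded-degree hypothesis (\ref{assume:delta}). Setting $\eta:=\varepsilon\mu/2$ and $A_i:=\{j\in V:\exists\,\mathfrak p\in\cP^t_{ij}\text{ with }\w(\mathfrak p)\ge e^{-t(\mu+\eta)}\}$, the trivial counting bound $|A_i|\le e^{t(\mu+\eta)}\le n^{1-\varepsilon/2}$ combined with the a priori estimate above yield $\pi_\star(A_i)=\cO(n^{-\varepsilon/2})\to 0$, while the LLN ensures $P^t(i,A_i)\to 1$. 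Consequently $\cD_i(t)\ge P^t(i,A_i)-\pi_\star(A_i)\to 1$ uniformly in $i$.

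\textbf{Upper bound and main obstacle.} For $t=(1+\varepsilon)t_\star$, I would use a \emph{nice-path} decomposition: call $\mathfrak p\in\cP^t_{ij}$ nice if its vertices are pairwise distinct and the $t_\star$-neighborhoods of both endpoints are tree-like. Local weak convergence and (\ref{assume:delta}) imply that the walk's realised trajectory is nice with probability $1-o(1)$, uniformly in $i$. On nice trajectories $\w(\mathfrak p)$ is a pure product indexed by distinct vertices, which can be paired against the analogous product in the backward-path representation of $\pi_\star(j)$. An annealed second-moment computation on the configuration model then yields $P^t(i,j)\approx \pi_\star(j)$ for all but $o(n)$ values of $j$, giving the claim. \textbf{The main obstacle} is that $t_\star=\Theta(\ln n)$ is precisely the depth at which local tree-likeness breaks down: the $t_\star$-ball of a typical vertex contains $\Theta(1)$ short cycles, and nice-path weights are exponentially small (of order $1/n$), so any naive union bound over bad configurations is fatal. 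Overcoming this requires an annealed exploration that reveals the environment only along the walks actually taken, together with a surgery that absorbs into negligible error terms every path crossing a cycle, self-intersecting, or traversing a degree-atypical region; this surgery, and its uniformity over the starting vertex, are the technical heart of the proof.
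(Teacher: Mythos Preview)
Your entropic heuristic is right, and the set $A_i$ with the counting bound $|A_i|\le n^{1-\varepsilon/2}$ is the correct object for the lower bound. The genuine gap is in your handling of $\pi_\star$. The uniform estimate $\max_j\pi_\star(j)=\cO(1/n)$ is generically false: Theorem~\ref{th:measure} shows that the empirical law of $(n\pi_\star(j))_j$ approximates that of the cascade variable $M_\star$, and solutions of the fixed-point equation~(\ref{rde}) have power-law rather than sub-exponential right tails whenever the ratios $d^-_i/d^+_i$ are not all equal (see the cited work of Liu and Barral), so that $\max_j n\pi_\star(j)$ is typically a positive power of $n$. Your bound $\pi_\star(A_i)\le |A_i|\cdot\max_j\pi_\star(j)$ then fails to vanish when $\lambda$ is close to~$1$. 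The proposed truncation is also circular: iterating $\pi_\star=\pi_\star P^r$ keeps $\pi_\star$ on the right-hand side, and replacing it there by any explicit measure already presupposes a mixing statement of the type you are trying to prove.

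The paper never analyses $\pi_\star$ a priori. It works instead with the explicit proxy $\wpi:=\pi_0 P^h$, $h=\lfloor\ln n/(10\ln\Delta)\rfloor$, proves both halves of the cutoff for $\widetilde\cD_i(t):=\|P^t(i,\cdot)-\wpi\|_{\textsc{tv}}$, and only then \emph{deduces} $\|\pi_\star-\wpi\|_{\textsc{tv}}\to 0$ by feeding the stationary law itself as initial distribution into the already-established upper bound. For the lower half, the small-measure estimate uses not $\max_j\wpi(j)$ but the second moment $\sum_j\wpi^2(j)$: interpreting this as the annealed probability that two independent length-$h$ walks started from $\pi_0$ collide, a simple count gives $\sum_j\wpi^2(j)=\cO(\ln^2 n/n)$, and Cauchy--Schwarz then controls $\wpi\{j:P^t(i,j)>\theta\}$ for $\theta=\ln^3 n/n$. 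For the upper half, your diagnosis of the obstacle is accurate but the sketch lacks the mechanism that overcomes it: the paper grows a subtree $\cT_i\subseteq G$ by a weight-ordered sequential exploration halted at height $t-h$ and weight threshold $\ln n/n$, declares a path \emph{nice} if it stays in $\cT_i$ for its first $t-h$ steps and has total weight at most $1/(n\ln^2 n)$, and then, conditionally on $\cT_i$ and on the backward $h$-ball of $j$, controls the nice-path mass $P_0^t(i,j)$ via Chatterjee's concentration inequality for the cost of a uniformly random bijection on the remaining half-edges. A bare second-moment computation does not survive the union bound over all starting vertices $i\in V$.
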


In view of Theorem \ref{th:cutoff}, it is tempting to ``zoom in'' around the cutoff point $t_\star$ until the details of the abrupt transition from $1$ to $0$ become visible. The appropriate window-width turns out to be 
\begin{eqnarray*}\label{eq:sigma}
w_\star := \frac{\sigma\sqrt{\ln n}}{\mu^{3/2}}, &  \textrm{ where } & \sigma^2  :=   \frac 1m\sum_{i=1}^nd_i^-\left(\ln d_i^+-\mu\right)^2.\end{eqnarray*}
Remarkably, the graph of the function $t\mapsto \cD_i(t)$ inside this window  approaches a universal shape, independent of the initial position $i$ and the precise degrees: the gaussian tail function. 

\begin{theorem}[Inside the cutoff window]
\label{th:window}
Assume that the variance $\sigma^2$ is asymptotically non-degenerate in the following weak sense:\begin{eqnarray}
\label{assume:sigma}
\sigma^2 & \gg & \frac{(\ln\ln n)^2}{\ln n}.
\end{eqnarray}
Then, for $t= t_\star+\lambda w_\star+o(w_\star)$ with $\lambda\in\RR$ fixed,  we have
\begin{eqnarray*}
\max_{i\in V}\left| \cD_i(t)-\frac{1}{\sqrt{2\pi}}\int_{\lambda}^\infty e^{-\frac{u^2}{2}}\D u\right | & \xrightarrow[n\to\infty]{\PP} & 0.
\end{eqnarray*}
\end{theorem}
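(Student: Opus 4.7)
My plan is to use the entropic method: interpret the total-variation distance as a probability concerning the log-weight of the walk path and apply a central limit theorem. The starting observation is that for a length-$t$ path $\mathfrak{p}$ with vertex sequence $i_0,\ldots,i_t$, one has $-\ln\w(\mathfrak{p})=\sum_{k=0}^{t-1}\ln d^+_{i_k}$. Under the walk law on a typical environment, the out-degrees $(d^+_{X_k})_{k<t}$ visited by the walker are essentially $t$ i.i.d.\ samples from the size-biased law $\tilde\pi(j):=d^-_j/m$, since at each step the tail used by the walker is paired with a uniformly random unmatched head. This law has mean $\mu$ and variance $\sigma^2$, and the window width is calibrated so that $w_\star\mu=\sigma\sqrt{t_\star}$, the natural CLT scale for these sums.

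\textbf{Main steps.} First, I would establish a CLT for $L_t:=-\ln\w(\mathfrak{P}_t)$, where $\mathfrak{P}_t$ is the walk path started from $i$. The walk explores only $\cO(\ln n)$ edges throughout the window, so self-intersections and exhaustion of the configuration contribute negligibly, and a Lindeberg-type argument yields $(L_t-t\mu)/(\sigma\sqrt{t})\Rightarrow\cN(0,1)$. The assumption (\ref{assume:sigma}) is precisely what keeps this CLT nondegenerate on the window, since $(\ln n-t\mu)/(\sigma\sqrt{t})\to -\lambda$ there. Second, I would invoke the detailed description of $\pi_\star$ already provided in the paper, together with local tree-likeness, to show that $\pi_\star(j)=\Theta(1/n)$ for typical $j$, and hence that the set $S_i:=\{j\colon P^t(i,j)>\pi_\star(j)\}$ satisfies $\pi_\star(S_i)=o(1)$. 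Third, I would prove the single-path approximation $P^t(i,X_t)=(1+o(1))\w(\mathfrak{P}_t)$ w.h.p., which says that $P^t(i,\cdot)$ is essentially the pushforward under the walk of the weight $\w(\cdot)$. Putting these together,
\[
\cD_i(t)=P^t(i,S_i)-\pi_\star(S_i)\;\approx\;\PP_i\bigl(\w(\mathfrak{P}_t)\geq c/n\bigr)=\PP_i\bigl(L_t\leq\ln n-\ln c\bigr),
\]
and the CLT yields the Gaussian tail $\int_\lambda^\infty e^{-u^2/2}/\sqrt{2\pi}\,\D u$.

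\textbf{Main obstacle.} The delicate step will be the single-path approximation near criticality: when $t\asymp t_\star$ the walk has explored of order $n$ paths of length $t$, and one must show that the contribution to $P^t(i,X_t)$ of paths other than $\mathfrak{P}_t$ itself is negligible. I would attack this with a second-moment estimate on the number of pairs of distinct length-$t$ paths from $i$ ending at a common vertex, combined with a switching-type control of the short-cycle structure inside the out-neighbourhood of $i$; the min-degree assumption $\delta\geq 2$ should prevent any heavy concentration of paths. A further subtlety is the uniformity over $i\in V$ asserted by the theorem: this requires sharpening the CLT to a quantitative Berry--Esseen-type estimate, coupling it with a quenched deviation bound on $L_t$ valid simultaneously for all starting vertices, and then union-bounding over the $n$ choices of $i$.
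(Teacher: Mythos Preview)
Your high-level strategy---reduce $\cD_i(t)$ to the quenched probability that the walk's path has weight above a threshold near $1/n$, then apply a CLT to the log-weight---is exactly the paper's, formalized there as Propositions~\ref{pr:distance} and~\ref{pr:weight}. But two of your implementation steps contain genuine gaps.

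First, your plan for uniformity in $i$ (``Berry--Esseen plus a union bound over the $n$ choices of $i$'') cannot work as stated: Berry--Esseen only pins the quenched tail $\cQ_{i,t}(\theta)$ to its i.i.d.\ idealization $q_t(\theta)$ up to an error of order $1/\sqrt{t}=1/\sqrt{\ln n}$, far too weak for a union bound over $n$ vertices. The paper handles this in Lemma~\ref{lm:coupling} by a boosting trick: it launches $r=\lfloor\ln^2 n\rfloor$ independent walks from $i$, notes that ${\bf 1}_{i\in V_\star}(\overline{\cQ}_{i,t}(\theta))^r$ lower-bounds the quenched probability of a tree-structured event whose annealed probability is at most $(q_t(\theta)+o(1))^r$, and applies Markov to the $r$-th power. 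This is what produces the super-polynomial decay needed for the union bound. Second, there is a circularity in your appeal to ``the detailed description of $\pi_\star$ already provided in the paper'': Theorem~\ref{th:measure} (and the statement $\pi_\star(j)=\Theta(1/n)$) relies on \eqref{eq:proxy}, which the paper obtains only \emph{after} establishing the upper bound in Theorem~\ref{th:cutoff}. You cannot assume anything about $\pi_\star$ before mixing is proved; the paper resolves this by working throughout with the explicit proxy $\wpi=\pi_0 P^h$ in place of $\pi_\star$.

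Finally, on the obstacle you identify: the paper does \emph{not} attempt a single-path approximation $P^t(i,X_t)\approx\w(\mathfrak{P}_t)$, and at $t\approx t_\star$---where the forward ball of $i$ already has order $n$ leaves---it is not clear a second-moment count of path pairs with common endpoint would close. Instead the paper restricts to ``nice'' paths (Definition~\ref{def:nice}: low weight, confined to a subtree $\cT_i$ for the first $t-h$ steps, with a unique last-$h$ segment), and proves directly via Chatterjee's concentration inequality for random bijections that the restricted kernel satisfies $P_0^t(i,j)\le(1+\varepsilon)\wpi(j)+\varepsilon/n$ uniformly. The probability of escaping $\cT_i$ is then bounded by a separate supermartingale argument using Freedman's inequality (Proposition~\ref{pr:loss}). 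This sidesteps any claim about how many paths reach a typical $j$.
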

In contrast, our third main result asserts that the mixing time is reduced from $t_\star$ to constantly many steps when starting from a more spread-out distribution, such as the \emph{in-degree distribution}:
\begin{eqnarray}
\label{eq:indegree}
\pi_{0}^{-}(i) & := & \frac{d^-_i}{m},\qquad i\in V.
\end{eqnarray}
For future reference, the \emph{out-degree distribution} is naturally defined by changing the  $-$ to $+$ above.
\begin{theorem}[Exponential convergence from a uniform head]
\label{th:uniform}
Fix $t\in\dN$ and set $\pi_{t}^{-}:=\pi_{0}^{-} P^t$. Then,
\begin{eqnarray*}
4\|\pi_{t}^{-}-\pi_\star\|_{\textsc{tv}}^2 & \leq &  \frac{n\left(\gamma-1\right)}{m(1-\varrho)}\varrho^t+o_\PP(1),
\end{eqnarray*}
where $o_\PP(1)$ denotes a term that tends to $0$ in probability as $n\to\infty$, and where $\varrho,\gamma$ are  defined by
\begin{eqnarray*}
\varrho \ := \ \frac{1}{m}\sum_{i=1}^n\frac{d_i^-}{d^+_i}
 &  \textrm{ and }& 
 \gamma \ := \ \frac{1}{m}\sum_{i=1}^n\frac{\left(d_i^-\right)^2}{d_i^+}.
\end{eqnarray*}
\end{theorem}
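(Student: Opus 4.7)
The plan is to bound the total-variation distance by an unweighted $\ell^2$ distance on $V$ and then evaluate the latter by a second-moment computation that exploits the uniform-matching structure of $\omega$. A Cauchy-Schwarz with the constant weight $1$ on each vertex gives
\[
4\|\pi_t-\pi_\star\|_{\textsc{tv}}^2 \;\leq\; n\sum_{j \in V}\bigl(\pi_t(j)-\pi_\star(j)\bigr)^2,
\]
which accounts for the prefactor $n$ appearing in the bound. I then telescope $\pi_\star-\pi_t = \sum_{s \geq t}(\pi_{s+1}-\pi_s)$, note that $\pi_{s+1}-\pi_s = (\pi_1-\pi_0)P^s$, and reduce the problem to estimating (a) the per-step norms $\EE\|\pi_{s+1}-\pi_s\|_2^2$ and (b) the cross covariances $\EE\langle \pi_{s+1}-\pi_s,\,\pi_{s'+1}-\pi_{s'}\rangle$.

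The initial increment admits the explicit expression
\[
m\bigl(\pi_1(j)-\pi_0(j)\bigr) \;=\; \sum_{f\in E^-_j}\Bigl(\tfrac{d^-_{\sigma(f)}}{d^+_{\sigma(f)}}-1\Bigr),
\]
where $\sigma(f)$ denotes the source of the arc $\omega^{-1}(f)\to f$. Under the uniform law of $\omega$ the vertex $\sigma(f)$ is nearly distributed as that of a uniform tail, and the centered variable on the right then has variance $\gamma-1$; summing the $d^-_j$ approximately independent contributions at each target $j$ yields $\EE\|\pi_1-\pi_0\|_2^2 = (\gamma-1)/m+o(1/m)$. The key step is to extend this by induction in $s$ to
\[
\EE\|\pi_{s+1}-\pi_s\|_2^2 \;=\; \tfrac{\gamma-1}{m}\,\varrho^s \;+\; o(1/m),
\]
in which the factor $\varrho^s$ emerges because each additional application of $P$ contracts the per-vertex variance by the multiplicative constant $\varrho = m^{-1}\sum_v d^-_v/d^+_v = \EE_{\pi_0}[1/d^+]$, i.e.\ by the average backtracking probability in one step. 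Equally important is the asymptotic orthogonality $\EE\langle \pi_{s+1}-\pi_s,\pi_{s'+1}-\pi_{s'}\rangle = o(1/m)$ for $s\neq s'$, obtained by revealing $\omega$ progressively along the shorter of the two walks: the unrevealed portion of $\omega$ remains a uniform bijection, so the longer iterate stays centered conditional on this information. Summing over $s \geq t$ gives $\EE\|\pi_\star-\pi_t\|_2^2 = \tfrac{\gamma-1}{m(1-\varrho)}\varrho^t+o(1/m)$, and a matching variance estimate (another configuration-model second moment) upgrades this mean bound to convergence in probability, absorbing the remainder into the $o_\PP(1)$ term.

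The main technical obstacle is carrying out these recursive second-moment estimates uniformly in $s\geq 0$ and, in particular, establishing the orthogonality of the increments. The heuristic is clean: in the tree-like local limit of the configuration model, successive applications of $P$ explore disjoint neighbourhoods, so the increments $(\pi_{s+1}-\pi_s)$ are uncorrelated. In the finite model these neighbourhoods may collide with probability $O(1/n)$ per freshly discovered vertex, so the $o(1/m)$ remainders have to be controlled by Hoeffding-type bounds for functions of a uniform random bijection, applied inductively at each level of the recursion.
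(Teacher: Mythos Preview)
Your approach differs substantially from the paper's. The paper never works with $\pi_\star$ directly in this proof; instead it replaces it by the proxy $\pi_h=\pi_0P^h$ with $h=\lfloor \tfrac{\ln n}{10\ln\Delta}\rfloor$, for which $\|\pi_h-\pi_\star\|_{\textsc{tv}}=o_\PP(1)$ is already available from the cutoff analysis. The task is then to bound $2\EE\|\pi_t-\pi_h\|_{\textsc{tv}}=\EE|n\pi_t(\cI)-n\pi_h(\cI)|$ for a uniform vertex $\cI$, and this is done not by an $\ell^2$ computation on $G$ but by coupling the backward ball $\cB^-(\cI,h)$ to the first $h$ generations of a marked Galton--Watson tree $\cT_\star$ (Proposition~\ref{pr:bpa}). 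On $\cT_\star$ the analogues $(M_s)_{s\ge 0}$ of $(n\pi_s(\cI))_{s\ge 0}$ form an \emph{exact} martingale with $\EE[(M_\star-M_t)^2]=\tfrac{n(\gamma-1)}{m(1-\varrho)}\varrho^t$ (Proposition~\ref{pr:martingale}), so the orthogonality of increments that you flag as ``the main technical obstacle'' is obtained for free, and Cauchy--Schwarz on $|M_t-M_h|$ finishes. Concentration is handled once, by Azuma--Hoeffding applied to $\omega\mapsto\|\pi_t-\pi_h\|_{\textsc{tv}}$ (Lemma~\ref{lm:concentration}).

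Your outline has two genuine gaps beyond the acknowledged one. First, the telescoping $\pi_\star-\pi_t=\sum_{s\ge t}(\pi_{s+1}-\pi_s)$ is an \emph{infinite} sum: even if each diagonal term carries error $o(1/m)$ and each off-diagonal pair $o(1/m)$, summing infinitely many such errors does not yield $o(1/m)$, and a truncation at some level $h$ puts you back at the paper's reduction to $\pi_h$. Second, $\pi_\star$ is only well defined when $G$ is strongly connected; on the complementary event (probability $o(1)$ but positive) the series need not converge, and since $\|\pi_\star-\pi_t\|_2^2$ can be $\Theta(1)$ there, its contribution to the annealed expectation is $o(1)$ rather than the $o(1/m)$ you need after multiplying by $n$. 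The paper's use of the deterministic proxy $\pi_h$ and of the idealized tree sidesteps both issues simultaneously and turns the approximate orthogonality you are trying to prove into an exact martingale identity.
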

Note that $\varrho\leq\frac{1}{\delta}$ and that the constant in front of $\varrho^t$ is less than $\Delta$; see \eqref{assume:delta}. Thus, the convergence to equilibrium occurs exponentially fast, uniformly in $n$. Note also that $\gamma\geq 1$ by the Cauchy-Schwarz inequality, with equality if and only if 
the graph is Eulerian, that is $d_i^+=d_i^-$ for all $i\in V$. In this very special case, it is not hard to see that $\pi_{0}^{-}P=\pi_{0}^{-}$: the equilibrium measure $\pi_\star$ is nothing but the in-degree distribution $\pi_{0}^{-}$ itself, in agreement with the fact that the upper-bound given by Theorem \ref{th:uniform} vanishes.

One surprising implication of Theorem \ref{th:uniform} is that the equilibrium mass $\pi_\star(i)$  assigned to a vertex $i\in V$ is essentially determined by the (backward) local neighbourhood of $i$ only. By combining this with a simple branching process approximation (see Section \ref{sec:martingale}), we obtain rather precise asymptotics for the equilibrium measure. 
Specifically, our last main result concerns 
the empirical distribution 
\begin{eqnarray}\label{emp1}
\psi_n := \frac1n\sum_{i=1}^n\delta_{n\pi_\star(i)},
\end{eqnarray}
of the numbers $\{n\pi_\star(i)\}_{i\in V}$ (as depicted in Figure \ref{fig:cutoff}).
Clearly, $\psi_n$ is a random probability measure on $[0,\infty)$ satisfying 
\begin{eqnarray}\label{emp2}
\int_0^\infty x\,\psi_n(dx) = 1. 
\end{eqnarray}
Theorem \ref{th:measure} below asserts that $\psi_n$ concentrates around the deterministic probability measure $\cL$, defined as the law of the 
random variable
\begin{eqnarray}
\label{eq:mstar}
M_\star  & {=} & \frac{n}{m}\sum_{k=1}^{d^-_\cI}Z_k,
\end{eqnarray}
where $\cI$ is a random vertex, uniformly distributed on $V$, and $(Z_k)_{k\geq 1}$ is an independent set of  i.i.d.\ mean-one random variables with common law determined by the distributional fixed-point equation
\begin{eqnarray}
\label{rde}
Z_1 & \stackrel{d}{=} & \frac{1}{{d}^+_\cJ}\sum_{k=1}^{{d}^-_\cJ}Z_k,
\end{eqnarray}
where the random vertex $\cJ$ has the out-degree distribution $\pi_0^+$ and is independent of $({Z}_k)_{k\geq 1}$.
This recursive distributional equation has been extensively studied, $Z_1$ being a special case of a random variable stable by weighted sum. Such self-similar variables appear notably in connections with Mandelbrot's multiplicative cascades and branching random walks. A more general version of \eqref{rde} has been studied by R{\"o}sler \cite{MR1176497}, Liu \cite{MR1450934,MR1439972,MR1741808,MR1847093} and  Barral \cite{MR1717530,MR3275997}. Among others, these references provide detailed results concerning the uniqueness of the solution $Z_1$, its left and right tails, its positive and negative moments, its support, and even its absolute continuity w.r.t. Lebesgue's measure. 
For example, in the non-Eulerian case (${d}^+_i \ne {d}^-_i$ for some $1 \leq i \leq  n$),  Liu \cite[Theorem 2.3]{MR1847093} implies that the variable $Z$ of mean one which satisfies \eqref{rde}  is absolutely continuous.

To state our result, we recall that the $1-$Wasserstein (or Kantorovich-Rubinstein) distance between two probability measures   $\cL_1,\cL_2$ on $\dR$ (see, e.g., \cite{villani}[Chapter 6])  is defined as
\begin{eqnarray*}
\cW\left(\cL_1,\cL_2\right) & = & \sup_{f}\left|\int_{\dR}f{\D} \cL_1- \int_{\dR}f{\D} \cL_2\right|,
\end{eqnarray*}
where the supremum runs over all $f\colon\dR\to\dR$ satisfying $|f(x)-f(y)|\leq |x-y|$ for all $x,y\in\dR$.
\begin{theorem}[Structure of the equilibrium measure]Let $\cL$ be the law of $M_\star$ from \eqref{eq:mstar}-\eqref{rde}, and let $\psi_n$ be the empirical distribution of $\{n\pi_\star(i)\}_{i\in V}$ defined in \eqref{emp1}. Then,
\label{th:measure}
\begin{eqnarray*}
\cW\left(\psi_n,\cL\right) & \xrightarrow[n\to\infty]{\PP} & 0.
\end{eqnarray*}
\end{theorem}

\begin{remark}[Conservation of mass]\label{rem_invmeas}
The fact that the random empirical measure $\psi_n$ approaches the deterministic law $\cL$ in the $1-$Wasserstein sense rather than just in the usual weak sense is important, as it ensures that the first moment \eqref{emp2} is conserved, i.e., that $\cL$ has mean $1$. This removes the scale indeterminacy inherent to equation (\ref{rde}), and rules out the possibility that a non-vanishing part of the mass of $\pi_\star$ concentrates on a negligible fraction of the state space. Note, however, that $\cL$ does not capture the asymptotics of exceptional values such as $\max_{i\in[n]}\pi_\star(i)$ or $\min_{i\in[n]}\pi_\star(i)$. See the recent preprint \cite{2015arXivRout} for estimates on these extremes in the special case where all out-degrees are equal.
\end{remark}

\begin{remark}[Dependence on $n$]
Note that the deterministic measure $\cL$ actually depends on $n$, as did the quantities $\mu,\sigma^2, \gamma,\varrho$ appearing in the above theorems: we have chosen to express all our approximations directly in terms of the true degree sequence $(d_i^\pm)_{1\leq i\leq n}$ (which we view as our input parameter), rather than loosing in generality by artificially assuming the weak convergence of the empirical degree distribution $\frac{1}{n}\sum_{i=1}^n\delta_{(d_i^-,d_i^+)}$ to some $n-$independent limit. Of course, any external assumption on the $n\to\infty$ behaviour of the degrees can then  be ``projected" onto the $n-$dependent constants provided by our results to yield \emph{bona fide} convergence results, if needed.
\end{remark}

\section{Related work}

The phase transition described in Theorem \ref{th:cutoff} is an instance of the celebrated \emph{cutoff phenomenon}, first singled out in the early 1980's in the context of card shuffling by Diaconis, Shahshahani and Aldous \cite{diaconis1981generating,aldous1983mixing,aldous1986shuffling}. This remarkable discontinuity in the convergence to equilibrium of an ergodic Markov chain has since then been identified in a variety of contexts, ranging from random walks on groups to interacting particle systems. We refer the reader to  \cite{diaconis1996cutoff,saloff2004random,chen2008cutoff} for more details. Perhaps surprisingly, the emergence of the gaussian shape inside the cutoff window in our Theorem \ref{th:window} is not isolated: the very same feature has been observed in a few unrelated models, such as the random walk on the $n-$dimensional hypercube \cite{MR1068491}, or the simple-exclusion process on the circle \cite{2015arXiv150200952L}.

Motivated by applications to real-world networks  (see, e.g., the survey by Cooper \cite{cooper2011random} and the references therein), the mixing properties of random walks on large but finite random graphs have recently become the subject of many investigations. The attention has been mostly restricted to the undirected setting, where the in-degree distribution is reversible, and therefore stationary. In particular, Frieze and Cooper have studied the \emph{cover time} (i.e., the expected time needed for the chain to visit all states) of various random graphs \cite{MR2157821,MR2283218,MR2290325,MR2422388}, and analyzed the precise component structures induced by the walk on random regular graphs \cite{cooper2014vacant}. Bounds for the mixing time on the largest component of the popular Erd\H os--Renyi model have also been obtained by various authors, in both the critical and super-critical connectivity  regime \cite{MR2435849,MR3252922,MR2428978,MR2962084}. 

More directly related to our work is the  inspiring paper \cite{lubetzky2010cutoff}  by Lubetzky and Sly, which establishes the cutoff phenomenon and determines its precise window and shape for  the simple and the non-backtracking random walks on random regular graphs. The results therein were very recently generalized to all non-bipartite regular Ramanujan graphs by Lubetzky and Peres \cite{2015arXivRamanujan}, thereby confirming a long-standing conjecture of Peres \cite{peresamerican}. In  \cite{2015arXiv}, Berestycki, Lubetzky, Peres and Sly establish the cutoff phenomenon on the Erd\H os--Renyi model and on the more general configuration model, for both the simple and non-backtracking random walks. The latter case was  simultaneously and independently addressed by Ben-Hamou and Salez \cite{2015arXivNBRW}, who additionally determine the precise second-order behaviour inside the cutoff window. 

In contrast, very little is known about random walks on random directed graphs, and the present paper seems to provide the first proof of a cutoff phenomenon in this setting. The failure of the crucial \emph{reversibility} property makes many of the ingredients used in the above works unavailable. In fact, even understanding the equilibrium measure  constitutes an important theoretical challenge, with applications to link-based ranking in large databases (see, e.g., \cite{2014arXiv14097443} and the references therein). In \cite{MR2885424}, Cooper and Frieze consider the random digraph on $n$ vertices formed by independently placing an arc between every pair of vertices with probability $p=\frac{d\ln n}{n}$, where $d>1$ is fixed while $n\to\infty$. In this regime, they prove that the equilibrium measure is asymptotically close to the in-degree distribution. The more recent work \cite{2015arXivRout} by Addario-Berry, Balle and Perarnau provides precise estimates on the extrema of the equilibrium measure in the special case where all out-degrees are equal.  
As mentioned in Remark \ref{rem_invmeas}, such extremal values are not captured by our Wasserstein approximation for the empirical measure $\frac 1n\sum_{i=1}^n\delta_{n\pi_\star(i)}$, and understanding them in the general setting considered here remains an interesting open problem.

\section{Proof outline and main ingredients}
\subsection{Dealing with an unknown equilibrium measure}
\label{subsec:proxypi}
One difficulty in controlling the distance to equilibrium is that $\pi_\star$ is not known a priori. We will have to work instead with the proxy $\wpi:=\pi_{0}^{-} P^h$,  where $\pi_{0}^{-}$ is the in-degree distribution and
\begin{eqnarray}
\label{def:h}
h & := & \left\lfloor\frac{\ln n}{10\ln \Delta}\right\rfloor.
\end{eqnarray}
Establishing Theorems \ref{th:cutoff} and \ref{th:window} with  $$\widetilde{\cD}_i(t):=\|P^t(i,\cdot)-\wpi\|_{\textsc{tv}}$$ instead of $\cD_i(t)$ is actually sufficient. Indeed, it ensures in particular that for (say) $t\geq 2t_\star$, 
\begin{eqnarray*}
\max_{i\in V}\|P^t(i,\cdot)-\wpi\|_{\textsc{tv}} & \xrightarrow[n\to\infty]{\PP} & 0.
\end{eqnarray*}
Now, by convexity, this maximum automatically extends to all initial distributions on $V$. In particular, one may start from the invariant measure $\pi_\star$ itself: since $\pi_\star P^t=\pi_\star$, we obtain
\begin{eqnarray}
\label{eq:proxy}
\|\pi_\star-\wpi\|_{\textsc{tv}} & \xrightarrow[n\to\infty]{\PP} & 0.
\end{eqnarray}
By the triangle inequality, we finally deduce that
\begin{eqnarray*}
\sup_{i\in V,\,t\in\dN}\left|\widetilde{\cD}_i(t)-\cD_i(t)\right| & \xrightarrow[n\to\infty]{\PP} & 0,
\end{eqnarray*}
so that the conclusions obtained for $\widetilde{\cD}_i$ in Theorems 1 and 2 are automatically transferred  to $\cD_i$.  
\subsection{Sequential generation and tree-like structure} 
\label{sec:treelike}
An elementary yet crucial observation about the uniform environment $\omega$ is that it can be generated sequentially, starting with all heads and tails unmatched, by repeating $m$ times the following steps: 
\begin{enumerate}
\item an unmatched tail $e$ is selected according to some priority rule;
\item an unmatched head $f$ is chosen uniformly at random;
\item $e$ is matched with $f$ to form the arc $ef$, that is, $\omega(e):=f$.
\end{enumerate}
The resulting bijection $\omega$ is uniform, regardless of which particular priority rule is used. 
In the sequel, we shall intensively exploit this degree of freedom to simplify the analysis of  the environment. In this respect, the following observation will prove useful. Let us say that a \emph{collision} occurs whenever a head gets chosen, whose end-point $i$ was already \emph{alive} in the sense that some tail $e\in E^+_i$ or head $f\in E^-_i$ had previously been chosen. Since less than $2k$ vertices are alive when the $k^{\textrm{th}}$ head gets chosen, less than $2\Delta k$ of the $m-k+1$ possible choices can result in a collision. Thus, the conditional chance that the $k^{\textrm{th}}$ arc causes a collision, given the past, is less than
$\frac{2\Delta k}{m-k+1}$:
\begin{lemma}[Collisions are rare]\label{lm:collisions}
Let $1\leq k\leq m$. The number $Z_k$ of collisions caused by the first $k$ arcs is stochastically dominated by a Binomial$\left(k,\frac{2\Delta k}{m-k+1}\right)$ random variable. In particular, 
\begin{eqnarray*}
\PP\left(Z_k\geq 1\right) \ \leq \ \frac{2\Delta k^2}{m-k+1} & \textrm{ and } & \PP\left(Z_k\geq 2\right) \ \leq \ \frac{2\Delta^2 k^4}{(m-k+1)^2}.
\end{eqnarray*}
\end{lemma}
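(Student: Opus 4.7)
The plan is to formalize the informal counting argument sketched in the paragraph immediately preceding the lemma. Let $\cF_j$ denote the $\sigma$-algebra generated by the first $j$ matchings produced by the sequential construction, and let $X_j:=\IND\{\text{the }j^{\textrm{th}}\text{ arc causes a collision}\}$, so that $Z_k=\sum_{j=1}^k X_j$. The core estimate is a uniform upper bound on the conditional probability
\[
p_j \ := \ \PP\left(X_j=1 \,\big|\, \cF_{j-1}\right).
\]
First I would observe that after $j-1$ matchings have been performed, strictly fewer than $2(j-1)<2k$ vertices can be \emph{alive} (each matched arc activates at most two new vertices). Since each alive vertex carries at most $\Delta$ heads, the number of unmatched heads belonging to alive vertices is at most $2\Delta(j-1)<2\Delta k$. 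On the other hand, there remain exactly $m-(j-1)\geq m-k+1$ unmatched heads to choose from uniformly at random in step (ii) of the construction. The ratio gives $p_j\leq \frac{2\Delta k}{m-k+1}=:p$, uniformly in $j\leq k$ and in the past.

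Next I would promote this pointwise bound on conditional means to stochastic domination of the full sum. This is the standard coupling: draw i.i.d.\ Uniform$[0,1]$ variables $U_1,\dots,U_k$, independent of the sequential construction, and replace step (ii) by an equivalent sampling that realises $X_j=\IND\{U_j\leq p_j\}$. Setting $Y_j:=\IND\{U_j\leq p\}$ yields $X_j\leq Y_j$ almost surely since $p_j\leq p$, and the $Y_j$ are i.i.d.\ Bernoulli$(p)$. Summing over $j\leq k$ gives $Z_k\leq \sum_{j=1}^k Y_j\sim\textrm{Binomial}(k,p)$ almost surely, which establishes the stochastic domination claim. (Equivalently, one can invoke the well-known fact that a sum of $\{0,1\}$-variables with conditional means bounded by $p$ is stochastically dominated by $\textrm{Binomial}(k,p)$.)

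Finally, the two probability bounds follow by elementary estimates on the binomial. By the union bound,
\[
\PP(Z_k\geq 1)\ \leq\ \PP\!\left(\textrm{Bin}(k,p)\geq 1\right)\ \leq\ k\,p\ =\ \frac{2\Delta k^2}{m-k+1},
\]
and similarly
\[
\PP(Z_k\geq 2)\ \leq\ \PP\!\left(\textrm{Bin}(k,p)\geq 2\right)\ \leq\ \binom{k}{2}p^2\ \leq\ \frac{k^2}{2}\cdot\frac{4\Delta^2 k^2}{(m-k+1)^2}\ =\ \frac{2\Delta^2 k^4}{(m-k+1)^2},
\]
which matches the stated inequalities exactly.

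There is no genuine obstacle here: the only subtle point is the counting step showing that at most $2(j-1)$ vertices are alive after $j-1$ matchings, which one should justify carefully by noting that each matching touches at most two distinct vertices (the endpoints of the newly created arc). Once that is in place, everything reduces to a one-line coupling plus a Markov/union bound, so the remaining work is purely bookkeeping.
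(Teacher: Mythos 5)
Your proof is correct and follows exactly the argument the paper gives in the paragraph preceding the lemma: bound the conditional collision probability at each step by $\frac{2\Delta k}{m-k+1}$, couple with i.i.d.\ Bernoulli variables to get domination by a Binomial, and finish with the union bound and the $\binom{k}{2}p^2$ estimate. One small bookkeeping slip: at the moment the $j^{\textrm{th}}$ head is drawn, the vertex of the just-selected tail $e_j$ is also alive, so the correct count is at most $2j-1$ alive vertices rather than $2(j-1)$; since $2j-1<2k$ this does not affect the stated bound $p_j\leq\frac{2\Delta k}{m-k+1}$ or anything downstream.
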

Here is one application: the \emph{forward ball} of radius $t$ around $i\in V$ is the subgraph $\cB^+(i,t)\subseteq G$ induced by the directed paths of length $t$ from $i$. It can be sequentially generated by giving priority to those unmatched tails $e$ that currently lie at minimal distance from $i$, until this minimal distance exceeds $t$. At most $k=\Delta+\cdots+\Delta^t$ edges are formed by then, and each collision corresponds to the formation of a transverse arc violating the directed-tree structure of $\cB^+(i,t)$.  Choosing $t=2h$ with $h$ defined in (\ref{def:h}) ensures that $\PP\left(Z_k\geq 2\right)=o\left(\frac 1n\right)$, uniformly in $i\in V$. We may thus take a union-bound and conclude that with high probability, $G$ is locally tree-like in the following sense:
\begin{eqnarray*}
\forall i\in V, & & \cB^+(i,2h)  \textrm{ is either a directed tree, or a directed tree with an extra arc}. 
\end{eqnarray*}

Combining this with the fact that all out-degrees are at least $2$, it is not difficult  to deduce the following result. 
Let $V_\star$ denote the set of vertices $i\in V$ such that $\cB^+(i,h)$ is a directed tree. 
 \begin{proposition}
 \label{pr:treelike}
With high probability, 
\begin{eqnarray*}
\forall i\in V\,,\;\forall \ell\in\dN\,, \qquad P^\ell(i,V\setminus V_\star) & \leq & 2^{-\ell\wedge h}.
\end{eqnarray*}
\end{proposition}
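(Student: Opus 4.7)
The plan is to condition on the high-probability event $\cE$ established just before the statement: for every $i\in V$, the ball $\cB^+(i,2h)$ is either a directed tree or a directed tree plus exactly one extra arc. Because this holds simultaneously for every possible root, its consequences apply uniformly over $V$. Under $\cE$, I bound $P^\ell(i,V\setminus V_\star)$ separately in the two regimes $\ell\leq h$ and $\ell>h$.

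For $\ell\leq h$, fix $i$ and note that any $j\in\cB^+(i,\ell)$ satisfies $\cB^+(j,h)\subseteq\cB^+(i,2h)$, since $\ell+h\leq 2h$. If $\cB^+(i,2h)$ is a tree, then $\cB^+(j,h)$ is also a tree, so $j\in V_\star$ and $P^\ell(i,V\setminus V_\star)=0$. Otherwise, write $e'f'$ for the unique extra arc and $\alpha$ for its tail-vertex, lying at some tree-depth $r\leq 2h$. A vertex $j$ reachable in $\ell$ steps is bad precisely when $\cB^+(j,h)$ contains $e'f'$, i.e. when $\alpha$ is forward-reachable from $j$ in at most $h-1$ steps within $\cB^+(i,2h)$. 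Since the only non-tree arc in $\cB^+(i,2h)$ \emph{exits} $\alpha$, any such path to $\alpha$ must stay in the tree part, so $j$ must be a tree-ancestor of $\alpha$, i.e. one of the vertices on the unique tree-path $i=v_0\to v_1\to\cdots\to v_r=\alpha$. The walk from $i$ can reach such a $j$ at time $\ell$ only by following a forced out-tail at every step (the successive tree-tails towards $\alpha$, plus, in the back-edge subcase once the walker has reached $\alpha$, the specific tails tracing the induced directed cycle). Each forced tail is one out of at least $\delta\geq 2$ available at that vertex, so the weight of this essentially unique trajectory is bounded by $\delta^{-\ell}\leq 2^{-\ell}$.

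For $\ell>h$, the Markov property combined with the bound just proved (applied from every starting vertex $j$, valid because $\cE$ is defined simultaneously over all roots) yields
\[P^\ell(i,V\setminus V_\star)\ =\ \sum_{j\in V}P^{\ell-h}(i,j)\,P^h(j,V\setminus V_\star)\ \leq\ \max_{j\in V}P^h(j,V\setminus V_\star)\ \leq\ 2^{-h}.\]

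The main subtlety lies in the $\ell\leq h$ case when the extra arc is a back-edge creating a directed cycle through $\alpha$: one has to check carefully that the walk's canonical trajectory into the cycle is truly the only way to hit a bad vertex at time $\ell$, and that at each time step there is at most one bad candidate the walker can occupy. The key point is that any deviation from the forced choice at some step sends the walker into a subtree that does not contain the cycle, from which the bad set is then unreachable, because $\cE$ permits only a single extra arc inside $\cB^+(i,2h)$. Combined with the fact that each forced choice has probability at most $1/\delta\leq 1/2$, this yields the stated bound without any spurious combinatorial prefactor.
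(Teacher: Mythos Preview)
Your argument is correct and follows exactly the route the paper sketches (the paper itself only says ``it is not difficult to deduce'' and relegates the argument to a one-line remark): on the event that every $\cB^+(i,2h)$ is a tree plus at most one extra arc, show that for $\ell\le h$ at most one length-$\ell$ path from $i$ can end outside $V_\star$, then bootstrap to $\ell>h$ via the Chapman--Kolmogorov identity. You supply the geometric details the paper omits, in particular the back-edge case.

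One harmless imprecision: the claim ``$j$ is bad \emph{precisely when} $\cB^+(j,h)$ contains $e'f'$'' is too strong. If the head $f'$ lands in a subtree of $\cB^+(i,2h)$ disjoint from the one containing $j$, then $\cB^+(j,h)$ may contain the extra arc yet still be a directed tree rooted at $j$ (the target $\beta$ acquires only one incoming arc there). However, your argument only uses the implication ``$j$ bad $\Rightarrow$ $\alpha$ reachable from $j$ $\Rightarrow$ $j$ is a tree-ancestor of $\alpha$'', and then bounds the probability of hitting the (possibly larger) set of tree-ancestors of $\alpha$. So the overreach does not affect the bound; replacing ``precisely when'' by ``only if'' makes the write-up clean.
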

In other words, $V_\star$ is typically attained in constant time by the random walk, from any initial position. This will be very helpful, as the walk is much easier to control when starting from $V_\star$.  This takes after the good idea of `roots' that was introduced by Lubetzky and Sly \cite{lubetzky2010cutoff} in the case of regular graphs, and then reused  in  \cite{2015arXiv,2015arXivNBRW} for graphs with more general degrees.

\begin{remark}[Sampling with or without replacement]
\label{rk:coupling}
Instead of choosing heads uniformly among the remaining unmatched ones, we may choose uniformly from all heads, and retry if the chosen head was already matched. The chance that this occurs within the first $k$ steps is
$p = 1-\prod_{i=0}^{k-1}\frac{m-i}{m}$. This creates a coupling between the first $k$ chosen heads and an i.i.d. sample from the uniform distribution on all heads, under which the two sequences coincide with probability $1-p$. In the analysis of the sequential  process, we may thus replace the former by the latter at a total-variation cost $p\leq k^2/m$. We may even stop the coupling as soon as a head is sampled, whose end-point was already alive: this ensures the absence of collision, while multiplying the total-variation cost $p$ by a factor at most $\Delta$. Note that the end-point of a uniform head has the in-degree distribution.
\end{remark}
\subsection{Typical path weights}

 At a high level, the cutoff phenomenon around time $t_\star=\frac{\ln n}{\mu}$ described in Theorem 1 can be understood as the consequence of the following two key principles: 
\begin{enumerate}
\item[(i)] Trajectories whose weight exceeds  $\frac 1n$ constitute the essential obstruction to mixing, in the precise sense that their total weight is roughly equal to the distance to equilibrium. 
\item[(ii)] From the point-of-view of the walk, most trajectories of length $t$ have weight $e^{-\mu t+\cO(\sqrt{t})}$. 
\end{enumerate}
Thus, the essence of the cutoff phenomenon for the random walk on $G$ lies in a sharp concentration phenomenon for the weight of the paths seen by the walk. 
To formalize this idea, let 
\begin{eqnarray*}
\cQ_{i,t}(\theta) & := & \sum_{j\in V}\sum_{\mathfrak p\in\cP^t_{ij}}\w(\mathfrak p){\bf 1}_{\w\left({\mathfrak p}\right)> \theta}
\end{eqnarray*}
denote the quenched probability  that a random walk of length $t$ starting at $i$ follows a path whose weight exceeds $\theta$.  The above two claims can then be given the following precise meanings. 
\begin{proposition}[High-weight paths determine the distance to equilibrium]For any $t=t(n)$, 
\label{pr:distance}
\begin{eqnarray*}
\min_{i\in V}\wcD_i(t) & \geq & \min_{i\in V}\cQ_{i,t}\left(\frac{\ln^3n}{n}\right)-o_\PP(1)\\
\max_{i\in V}\wcD_i(t) & \leq & \max_{i\in V}\cQ_{i,t}\left(\frac{1}{n\ln^3n}\right)+o_\PP(1).
\end{eqnarray*}
\end{proposition}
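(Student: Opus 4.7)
The strategy for both inequalities is to start from the identity $\cD_i(t)=\sum_{j\in V}(P^t(i,j)-\wpi(j))^+$ and to split each $P^t(i,j)=H_j(i)+L_j(i)$ by a weight threshold $\theta$, where $H_j(i):=\sum_{\mathfrak p\in\cP^t_{ij}\colon \w(\mathfrak p)>\theta}\w(\mathfrak p)$ collects the high-weight paths. Note that $\sum_j H_j(i)=\cQ_{i,t}(\theta)$.

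For the lower bound I would take $\theta=\ln^3 n/n$ and introduce the ``heavily reachable'' set $S_i:=\{j\in V:\exists \mathfrak p\in\cP^t_{ij},\,\w(\mathfrak p)>\theta\}$. Each $j\in S_i$ is witnessed by a path of weight exceeding $\theta$, and since total path weight is at most $1$, one gets $|S_i|\leq 1/\theta=n/\ln^3 n$. The definition of $\cQ$ gives $P^t(i,S_i)\geq \cQ_{i,t}(\theta)$, hence
\[\cD_i(t)\;\geq\; P^t(i,S_i)-\wpi(S_i)\;\geq\;\cQ_{i,t}(\theta)-\wpi(S_i).\]
Taking the minimum over $i$, the claim reduces to $\max_{i\in V}\wpi(S_i)=o_\PP(1)$, which in view of $|S_i|\leq n/\ln^3 n$ would follow from any uniform max-mass estimate of the form $\max_{j\in V}\wpi(j)=O(\ln^2 n/n)$ with high probability. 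Such a bound is easily produced from $\wpi=\pi_0 P^h$: summing path weights over the typically tree-like backward ball of radius $h$ around $j$ yields $\wpi(j)=O(n^{-1+o(1)})$ since $h\ll\ln n$, which is much better than needed.

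For the upper bound I would take $\theta=1/(n\ln^3 n)$ and exploit the pointwise inequality $(H_j+L_j-\wpi(j))^+\leq H_j+(L_j-\wpi(j))^+$ together with $\sum_j H_j(i)=\cQ_{i,t}(\theta)$ to obtain
\[\cD_i(t)\;\leq\;\cQ_{i,t}(\theta)+\sum_{j\in V}(L_j(i)-\wpi(j))^+.\]
The proposition is thereby reduced to showing that the low-weight residual $\max_{i\in V}\sum_j(L_j(i)-\wpi(j))^+$ is $o_\PP(1)$. The guiding intuition is that any path of length $t$ with weight at most $1/(n\ln^3 n)$ has branched so many times that its endpoint distribution is effectively close to $\wpi$.

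The low-weight residual is the main obstacle. The plan would be to factor each length-$t$ path into a length-$(t-h)$ prefix and a length-$h$ suffix, so that $L_j(i)$ can be compared with the alternative representation $\wpi(j)=\sum_k \pi_0(k)P^h(k,j)$. A joint sequential exposure of the forward ball of radius $t-h$ around $i$ (à la Section~\ref{sec:treelike}) and the backward ball of radius $h$ around $j$ should then allow one to couple low-weight forward suffix-paths ending at $j$ with backward paths of length $h$ from $j$ whose starting vertex is $\pi_0$-distributed, at a cost controlled by Lemma~\ref{lm:collisions} and the sparsity assumption~\eqref{assume:delta}. The delicate part is to make this comparison uniform in $(i,j)$ with vanishing total residual, likely via a second-moment estimate over $(i,j)$ exploiting the near-independence of forward and backward local neighbourhoods at scale~$h$.
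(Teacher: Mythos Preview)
Your set-$S_i$ approach is close in spirit to the paper's, but the crucial step is not justified: you need $\max_{j}\wpi(j)=O(\ln^2 n/n)$, yet the ``easy'' argument you invoke only gives the deterministic bound $\wpi(j)\le \Delta^{h+1}/m=O(n^{-9/10})$ (there are at most $\Delta^h$ vertices $k$ with $P^h(k,j)>0$, each contributing at most $\Delta/m$). Combined with $|S_i|\le n/\ln^3 n$ this yields $\wpi(S_i)=O(n^{1/10}/\ln^3 n)\to\infty$, not $o(1)$. The claim that $O(n^{-1+o(1)})$ is ``much better'' than $O(\ln^2 n/n)$ has the inequality backwards. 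A genuine max-mass bound of the required strength would need a large-deviation estimate on the branching-type sum defining $\wpi(j)$, uniformly in $j$---not trivial. The paper sidesteps this by controlling the \emph{second moment} instead: it shows $\EE\big[\sum_j\wpi^2(j)\big]=O(\ln^2 n/n)$ via an annealed two-walk collision argument, and then Cauchy--Schwarz together with $|\{j:P^t(i,j)>\theta\}|\le 1/\theta$ gives $\wpi(S_i)\le\sqrt{\theta^{-1}\sum_j\wpi^2(j)}=o_\PP(1)$. Your argument can be repaired along exactly these lines.

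\textbf{Upper bound.} Your reduction $\cD_i(t)\le \cQ_{i,t}(\theta)+\sum_j(L_j(i)-\wpi(j))^+$ is correct and is precisely the paper's starting point. However, the plan you sketch for the residual is too vague and, where it is specific, points in the wrong direction. A second-moment estimate over $(i,j)$ cannot yield the \emph{uniform} $\max_{i}$ control required: one must beat a union bound over all $n^2$ pairs $(i,j)$, which demands exponential-type concentration. The paper's route is substantially more elaborate than your sketch suggests: it builds a subtree $\cT_i$ of the forward neighbourhood of $i$ by sequentially exposing arcs in decreasing order of weight down to a cutoff $w_{\min}=\ln n/n$ (this keeps the number of exposed arcs $O(n/\ln n)$), separately exposes the backward ball of radius $h$ around $j$, and then writes $P_0^t(i,j)$ as a weighted sum over the residual random matching. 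Chatterjee's concentration inequality for functionals of random bijections then gives sub-gaussian tails strong enough for a union bound over $V^2$. Finally, the probability that the walk leaves $\cT_i$ before time $t-h$ (the ``collision loss'') is controlled via a Freedman-type martingale inequality, using that the $k$-th exposed tail has weight at most $2/(k+2)$. None of these three ingredients---the weight-ordered tree construction, the bijection concentration, the martingale control of collision weight---appears in your outline.
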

\begin{proposition}[Most paths of length $t$ have weight $e^{-\mu t+\cO(\sqrt{t})}$] Assume that $t=\Theta(\ln n)$, and let $\theta$ depend arbitrarily on $n$. 
\label{pr:weight}
\begin{enumerate}
\item If $\frac{\mu t+\ln \theta}{\sqrt t} \to +\infty$ as $n\to\infty$, then 
\begin{eqnarray*}
\max_{i\in V}\cQ_{i,t}(\theta) & \xrightarrow[n\to\infty]{\PP}  & 0.
\end{eqnarray*}
\item If $\frac{\mu t+\ln \theta}{\sqrt t} \to -\infty$ as $n\to\infty$, then 
\begin{eqnarray*}
\min_{i\in V}\cQ_{i,t}(\theta)&  \xrightarrow[n\to\infty]{\PP} &  1.
\end{eqnarray*}
\item If $\frac{\mu t+\ln \theta}{\sigma\sqrt t} \to\lambda\in\dR$ as $n\to\infty$ and if assumption (\ref{assume:sigma}) holds, then 
\begin{eqnarray*}
\max_{i\in V}\left|\cQ_{i,t}(\theta)-\frac{1}{\sqrt{2\pi}}\int_{\lambda}^{\infty} e^{-\frac{u^2}{2}}\D u \right| & \xrightarrow[n\to\infty]{\PP} & 0.
\end{eqnarray*}
\end{enumerate} 
\end{proposition}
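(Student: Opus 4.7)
The plan is to identify $\cQ_{i,t}(\theta)$ as a tail probability for the walk's log-weight and reduce it to a central-limit or concentration statement for a sum of i.i.d.\ random variables. Setting $W_t:=\sum_{k=0}^{t-1}\ln d^+_{X_k}$, so that the weight of the walk's trajectory $(X_0,\ldots,X_t)$ equals $e^{-W_t}$, one has
$$\cQ_{i,t}(\theta) \ = \ \PP^\omega_i\bigl(W_t<\ln(1/\theta)\bigr).$$
The cornerstone observation is that, along a typical walk of length $t=\Theta(\ln n)$, the sequence $(\ln d^+_{X_k})_{k\ge 1}$ is effectively an i.i.d.\ sample from the pushforward of $\pi_0$ under $j\mapsto\ln d^+_j$, which by definition has mean $\mu$ and variance $\sigma^2$.

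To make this rigorous I would generate the walk and the environment jointly via the sequential scheme of Remark \ref{rk:coupling}: at each step the walk picks a uniform tail at its current position, and the $\omega$-image of that tail is sampled uniformly from \emph{all} heads, with replacement. As long as no collision occurs---that is, the sampled head's end-point is not already alive---the new position is a fresh $\pi_0$-sample, and hence $(\ln d^+_{X_k})_{k=1}^{t-1}$ is exactly i.i.d.\ with the desired law. By Lemma \ref{lm:collisions}, this coupling succeeds up to a total-variation cost $O(\Delta t^2/m)=O((\ln n)^2/n)=o(1)$. On the coupling event, $W_t$ equals the deterministic shift $\ln d^+_i$ plus a sum $S$ of $t-1$ i.i.d.\ bounded variables valued in $[\ln 2,\ln\Delta]$. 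I would then invoke Hoeffding's inequality for parts (1) and (2): the bound $\PP(|S-\mu(t-1)|>u)\le 2\exp(-u^2/(2(t-1)\ln^2\Delta))$ specialised at $u=|\mu t+\ln\theta|$ vanishes when $(\mu t+\ln\theta)/\sqrt t\to\pm\infty$, producing the claimed limits $0$ and $1$. For part (3), Berry-Esseen yields $\sup_x|\PP(S-\mu(t-1)<x\sigma\sqrt{t-1})-\Phi(x)|=O((\ln\Delta)/(\sigma\sqrt t))$; assumption (\ref{assume:sigma}) forces $\sigma\sqrt t\gg\ln\ln n$, so this error is $o(1)$, and evaluating at the $x$ such that $x\sigma\sqrt{t-1}+\mu(t-1)\to -\ln\theta-\ln d^+_i$ gives $\Phi(-\lambda)=\int_\lambda^\infty e^{-u^2/2}\D u/\sqrt{2\pi}$.

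The delicate point, which I expect to be the main obstacle, is the uniformity over $i\in V$ demanded by $\max_i$ and $\min_i$: the preceding argument yields convergence of each $\cQ_{i,t}(\theta)$ in the annealed sense, whereas the statement requires quenched convergence simultaneously at $n$ starting vertices. My plan here combines three ingredients. First, a second-moment estimate obtained by running two independent walks on the same environment and coupling them jointly with two i.i.d.\ sequences; Lemma \ref{lm:collisions} again bounds the coupling error by $O(t^2/n)$, giving $\VV(\cQ_{i,t}(\theta))\to 0$ for each fixed $i$. Second, an exponential Markov bound $\cQ_{i,t}(\theta)\le\theta^{-s}\EE^\omega_i[\w^s]$, paired with uniform control of the random moments $\EE^\omega_i[\w^s]$ via the same sequential coupling, which concentrates them around the deterministic value $(d^+_i)^{-s}\bigl(\tfrac 1m\sum_j d^-_j(d^+_j)^{-s}\bigr)^{t-1}$. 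Third, an appeal to Proposition \ref{pr:treelike} to reduce the problem to starting vertices in $V_\star$, whose tree-like forward ball makes $\cQ_{i,t}(\theta)$ a transparent function of a branching-type quantity on a local neighbourhood. The principal challenge is that a pointwise variance of $o(1)$ is not automatically sufficient to union-bound over $n$ vertices, so I anticipate needing either a higher-moment refinement of the second-moment computation---equivalently, concentration of $k$ simultaneous walks for a suitable $k=k(n)\to\infty$---or a more structural argument exploiting the continuity of $\cQ_{i,t}$ as a function of the local forward neighbourhood of $i$.
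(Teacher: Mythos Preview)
Your strategy is correct and closely matches the paper's: couple the walk's out-degree sequence with an i.i.d.\ sample from the in-degree-biased law, reduce to the deterministic tail $q_t(\theta):=\PP\bigl(\prod_{k=1}^t(D_k^+)^{-1}>\theta\bigr)$, and analyze $q_t$ via Chebychev (parts 1--2) or Berry--Esseen (part 3). You have also correctly located the obstruction---uniformity in $i$---and the cure: run $r=r(n)\to\infty$ walks and apply Markov to the $r$-th power. The paper does exactly this with $r=\lfloor\ln^2 n\rfloor$.

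The one ingredient you are missing is how to decorrelate those $r$ walks. Launched from the same vertex $i$, they share the same out-degrees until they separate, so the product bound $\PP(A_i)\le(q_t(\theta)+o(1))^r$ is not available for $\cQ_{i,t}$ directly, and your ``higher-moment refinement'' stalls at this point. The paper's device is to work instead with the spatial average
\[
\overline{\cQ}_{i,t}(\theta)\ :=\ \sum_{j}P^\ell(i,j)\,\cQ_{j,t}(\theta),\qquad \ell:=\lfloor 3\ln\ln n\rfloor,
\]
running each walk for $\ell+t$ steps but measuring only the weight of the last $t$. For $i\in V_\star$ the first $\ell$ steps lie in a tree, so the $k$-th walk either follows one of the $\le k-1<r$ previously used length-$\ell$ paths (probability $\le r\,2^{-\ell}=o(1)$) or reaches an unmatched tail by time $\ell$, after which its remaining trajectory couples with a fresh i.i.d.\ sample at cost $o(1)$. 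This gives $\PP(A_i)\le(q_t(\theta)+o(1))^r$, and Markov's inequality on the $r$-th power yields
\[
\PP\bigl(i\in V_\star,\ \overline{\cQ}_{i,t}(\theta)\ge q_t(\theta)+\varepsilon\bigr)\ \le\ \Bigl(\tfrac{q_t(\theta)+o(1)}{q_t(\theta)+\varepsilon}\Bigr)^{r}\ =\ o(1/n),
\]
which survives a union bound over $V$. Converting $\overline{\cQ}$ back to $\cQ$ costs a factor $\Delta^{\pm 2\ell}$ on $\theta$, i.e.\ an $O(\ln\ln n)$ shift of $\ln\theta$, negligible against $\sqrt t$ (and against $\sigma\sqrt t$ under (\ref{assume:sigma})); Proposition~\ref{pr:treelike} then passes from $V_\star$ to all of $V$. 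Your proposed exponential-Markov bound on moments is unnecessary and would not by itself recover the Gaussian profile in part~(3).
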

Those two results clearly imply Theorems \ref{th:cutoff} and \ref{th:window}, and their proofs occupy much of the paper. The detailed organization is as follows. 

Section \ref{sec:lower} is devoted to the proof of the lower bound in Proposition \ref{pr:distance}. The latter is based on the following simple observation: if the typical trajectory seen by a random walk of length $t$ has a probability much larger than $1/n$ to be actually followed,  then the random walk is essentially confined to $o(n)$ distinct trajectories and is therefore far from being mixed. A similar argument was already used in \cite{2015arXivNBRW}. Note, however, that this argument implicitly assumes that the equilibrium distribution is well spread out over the state space, a fact which was obvious in \cite{2015arXivNBRW} but is not at all clear here (except in the very special Eulerian case, where $\pi_\star$ reduces to the trivial in-degree distribution).

Section \ref{sec:weight} is devoted to the proof of Proposition \ref{pr:weight}. We first use Proposition \ref{pr:treelike} to reduce our task to that of controlling the chain from a slightly more spread-out initial law, namely $P^\ell(i, \cdot)$ with $i$ in $V_\star$ and $1\ll \ell \ll t_\star$. From such an initial condition, we may build on Remark \ref{rk:coupling} to couple the sequence of vertices visited by the random walker with i.i.d. samples from the in-degree distribution at a cost that is sufficiently low to allow for a union bound over all possible origins $i \in V_\star$, see Lemma \ref{lm:coupling}. The lack of time-reversibility is not a problem at all here, and a similar argument was actually used in \cite{2015arXiv,2015arXivNBRW} for  undirected graphs.

The upper bound in Proposition \ref{pr:distance} amounts to establishing a uniform  estimate of the form $$P^t (i,j) \leq {\pi_{h}^{-}} (j) + \veps(i,j),$$ with $\sum_j \veps(i,j) \leq  \cQ_{i,t}\left(\frac{1}{n\ln^3n}\right)+o_\PP(1),$ which is much more involved. This is done in Section \ref{sec:upper}, where we prove that the set of trajectories followed by the random walk is concentrated on a suitably defined collection of {\em nice paths} whose probabilities can be accurately estimated by applying a powerful concentration inequality for random bijections due to Chatterjee \cite{chatterjee2007stein}. We note that our definition of a nice path (Definition \ref{def:nice}) involves the local environment around both the origin and the destination, in a non-symmetric way. This direct consequence of the lack of time-reversibility  constitutes a substantial difference with the settings considered in  \cite{lubetzky2010cutoff,2015arXiv,2015arXivNBRW}. In particular, the exploration of the \emph{backward ball} around the destination requires a careful, specific treatment, and the technical assumption $\Delta=\cO(1)$ is used heavily at this point. Note that the difficulty persists even in the Eulerian case, despite the simple explicit form for $\pi_\star$.

Finally, once (\ref{eq:proxy}) is established, the branching process approximation for $G$ developed in section \ref{sec:martingale} quickly leads to  the proof of Theorems \ref{th:uniform} and \ref{th:measure} in sections \ref{sec:uniform} and \ref{sec:measure}.

\section{Proof of the lower-bound in Proposition \ref{pr:distance}}
\label{sec:lower}
In this section, we prove the easy half of Proposition \ref{pr:distance}: the lower-bound. 
Fix the environment $\omega$, a probability measure $\pi$ on $V$, $t\in\dN$, $\theta\in(0,1)$ and $i,j\in V$. Consider the inequality
\begin{eqnarray}
\label{ineq}
P^t(i,j) & \geq & \sum_{\mathfrak p\in\cP^t_{ij}}\w(\mathfrak p){\bf 1}_{\w(\mathfrak p)\leq \theta}.
\end{eqnarray}
If equality holds in \eqref{ineq}, then clearly 
\begin{eqnarray*}
\pi(j)-\sum_{\mathfrak p\in\cP^t_{ij}}\w(\mathfrak p){\bf 1}_{\w(\mathfrak p)\leq \theta} & \leq & \left[\pi(j)-P^t(i,j)\right]_+,\end{eqnarray*}
where $[x]_+:=\max(x,0)$ is the positive part of $x$. On the other-hand, if the inequality (\ref{ineq}) is strict, then 
 there must exist $\mathfrak p\in\cP^t_{ij}$ such that $\w(\mathfrak p)>\theta$, implying that $P^t(i,j)>\theta$, and hence that
 \begin{eqnarray*}
\pi(j)-\sum_{\mathfrak p\in\cP^t_{ij}}\w(\mathfrak p){\bf 1}_{\w(\mathfrak p)\leq \theta} & \leq & \pi(j){\bf 1}_{P^t(i,j)>\theta}.
\end{eqnarray*}
Thus, in either case, 
\begin{eqnarray*}
\pi(j)-\sum_{\mathfrak p\in\cP^t_{ij}}\w(\mathfrak p){\bf 1}_{\w(\mathfrak p)\leq \theta} & \leq & \left[\pi(j)-P^t(i,j)\right]_+ + \pi(j){\bf 1}_{P^t(i,j)>\theta}.
\end{eqnarray*}
Summing over all $j\in V$ and invoking the Cauchy--Schwarz  inequality and the fact that there are less than $1/\theta$ indices $j$ such that $P^t(i,j)>\theta$, we get
\begin{eqnarray}
\cQ_{i,t}(\theta) & \leq & \|\pi-P^t(i,\cdot)\|_{\textsc{tv}} + \sqrt{\frac{1}{\theta}\sum_{j\in V}\pi^2(j)}.\label{eq:lbQt}
\end{eqnarray}
We now specialize to our random environment, with $\theta=\frac{\ln^3 n}{n}$ and $\pi=\wpi$. To conclude the proof, it remains to verify that the square-root term is $o_\PP(1)$. We will in fact show the stronger
\begin{eqnarray*}
\EE\left[\sum_{j\in V}(\wpi)^2(j)\right]  = \cO \left(\frac{\ln^2 n}{n} \right)& \ll & \theta.
\end{eqnarray*}
Since $\wpi=\pi_{0}^{-} P^h$, the left-hand-side may be interpreted as $\PP\left(X_h=Y_h\right)$, where conditionally on the environment, $(X_k)_{0\leq k\leq h}$ and $(Y_k)_{0\leq k\leq h}$ are two independent random walks starting from the in-degree distribution $\pi_{0}^{-}$, and where the average is taken over both the walks and the environment. To evaluate this annealed probability, we generate the walks together with the environment, forming arcs along the way, as we need them. Initially, all tails and heads are unmatched, and $X_0$ is chosen according to the in-degree distribution. Then, inductively, at every step $1\leq k\leq  h$ :
\begin{enumerate}
\item A tail $e$ of the vertex $X_{k-1}$ is chosen uniformly at random.
\item If $e$ is unmatched, it gets matched to a uniformly chosen unmatched head $f$, i.e. $\omega(e):=f$.
\item In either case, $\omega(e)$ is now well-defined, and we let $X_k$ be its end-point. 
\end{enumerate}
Once $(X_k)_{0\leq k\leq h}$ has been generated, we proceed similarly with $(Y_k)_{0\leq k\leq h}$. Note that at most $2h$ arcs are formed during this process. For the event $\{X_h=Y_h\}$ to be realized, either a collision must occur (this has probability at most $\frac{8\Delta h^2}{m-2h}$ by Lemma \ref{lm:collisions}), or $\{Y_0=X_0\}$ must occur (this has probability at most $\frac\Delta m$). We deduce that
\begin{eqnarray*}
\PP\left(X_h=Y_h\right) & \leq & \frac{8\Delta (h+1)^2}{m-2h}.
\end{eqnarray*}  
The right-hand side is $\cO(\frac{\ln ^2n}n)$. In view of our choice of $\theta$, this concludes the proof. 
\section{Proof of Proposition \ref{pr:weight}}
Recall that Proposition \ref{pr:weight} controls, uniformly in $i\in V$, the quenched probability $\cQ_{i,t}(\theta)$ that a random walk of length $t$ starting at $i$ follows a path whose weight exceeds $\theta$. We first restrict our attention to vertices $i$ in $V_\star$, and replace $\cQ_{i,t}(\theta)$ with the more stable spatial average
$$\overline{\cQ}_{i,t}(\theta):=\sum_{j\in V}P^\ell(i,j)\cQ_{j,t}(\theta),\qquad\textrm{ where }\qquad \ell:=\lfloor 3 \ln\ln n\rfloor.$$
Let $(D_k^+)_{k\geq 1}$ denote i.i.d. copies of $d^+_\cJ$ where $\cJ$ follows the in-degree distribution on $V$. More explicitly, $\PP\left(D_k^+=d\right)=\frac{1}{m}\sum_{i=1}^nd^-_i{\bf 1}_{(d^+_i=d)}$. For $\theta\in[0,1]$, define
\begin{eqnarray*}
q_t(\theta) & :=  & \PP\left(\prod_{k=1}^t\frac{1}{D_k^+}>\theta\right). 
\end{eqnarray*}
\begin{lemma}For $t=\Theta(\ln n)$ and $\theta$ depending arbitrarily on $n$, 
\label{lm:coupling}
\begin{eqnarray*}
\max_{i\in V_\star}\left|\overline{\cQ}_{i,t}(\theta)-q_t(\theta)\right| & \xrightarrow[n\to\infty]{\PP} & 0. 
\end{eqnarray*}
\end{lemma}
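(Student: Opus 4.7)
My approach is to generate the walk and the environment jointly in sequential fashion and to invoke the i.i.d.-with-replacement coupling of Remark \ref{rk:coupling} to prove the annealed bound; the resulting estimate is then lifted to a uniform quenched bound over $i\in V_\star$ using the tree structure of $B^+(i,\ell)$.

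By the Markov property at time $\ell$,
\[
\overline{\cQ}_{i,t}(\theta)\;=\;\PP_i\!\left(\prod_{k=\ell}^{\ell+t-1}\frac{1}{d^+_{X_k}}>\theta\right),
\]
where $(X_k)_{k\ge 0}$ is the random walk started at $X_0=i$, so the quantity of interest is just the joint law of the out-degrees visited between times $\ell$ and $\ell+t-1$ under the combined walk/environment measure. Construct the walk jointly with $\omega$: at each step pick a uniform tail at $X_{k-1}$ and, if it is unmatched, draw a uniform unmatched head to define the new arc. By Remark \ref{rk:coupling} applied with parameter $\ell+t=\cO(\ln n)$, one may couple this sequential process with an i.i.d.\ version in which the head at each step is drawn uniformly from all $m$ heads (and thus has the in-degree distribution on endpoints), at total-variation cost $\cO(\Delta(\ell+t)^2/m)=\cO((\ln n)^2/n)=o(1)$, while simultaneously forbidding collisions. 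Under this coupling the $(d^+_{X_k})_{k\ge 1}$ are i.i.d.\ with the law of $D^+_1$, so
\[
\EE_\omega\!\left[\,\overline{\cQ}_{i,t}(\theta)\,\right]\;=\;q_t(\theta)+o(1),
\]
uniformly in $i\in V$ and $\theta\in[0,1]$.

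The main obstacle is to upgrade this annealed estimate into a pointwise statement uniform over $i\in V_\star$: a direct second-moment computation (two independent walks passed through the same coupling) only yields $\VV_\omega(\overline{\cQ}_{i,t}(\theta))=\cO((\ln n)^{-c})$ for some $c>0$, which is too weak to survive a bare union bound over the $\sim n$ vertices of $V_\star$. To overcome this I would exploit the tree structure of $B^+(i,\ell)$, precisely what the assumption $i\in V_\star$ grants and the reason the initial $\ell$ steps are introduced: decompose $\overline{\cQ}_{i,t}(\theta)=\sum_j \pi_\ell(j)\cQ_{j,t}(\theta)$ and note that $\pi_\ell$ is supported on the leaves of the tree $B^+(i,\ell)$ with $\max_j \pi_\ell(j)\le 2^{-\ell}=(\ln n)^{-3\ln 2}$. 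After revealing this tree (of size at most $\Delta^\ell=(\ln n)^{\cO(1)}$), the length-$t$ continuations from distinct leaves can be coupled to jointly independent i.i.d.\ sampling processes at a total cost that remains $o(1)$. The weighted sum of near-independent bounded summands then obeys a Hoeffding-type inequality with deviation tail $\exp(-\epsilon^2\cdot 2^{\ell+1})=\exp(-\epsilon^2(\ln n)^{c'})$, which easily absorbs a union bound over $|V_\star|\le n$ and yields the stated convergence.
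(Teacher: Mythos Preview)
Your annealed computation is correct and matches the paper's use of the sequential coupling. The gap is in the concentration step. You decompose $\overline{\cQ}_{i,t}(\theta)=\sum_j P^\ell(i,j)\,\cQ_{j,t}(\theta)$ and propose to apply a Hoeffding bound after ``coupling the length-$t$ continuations from distinct leaves to jointly independent i.i.d.\ sampling processes at cost $o(1)$''. But each summand $\cQ_{j,t}(\theta)$ is a function of the \emph{entire} forward ball $\cB^+(j,t)$, which contains of order $\Delta^t=n^{\Theta(1)}$ arcs since $t=\Theta(\ln n)$; the total number of heads to be sampled is therefore not polylogarithmic, and the cost in Remark~\ref{rk:coupling} is nowhere near $o(1)$, let alone the $o(1/n)$ needed for a union bound over $V_\star$. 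If instead ``continuation'' means a \emph{single} walk of length $t$ from each leaf, then the weighted sum of the resulting Bernoulli indicators is not $\overline{\cQ}_{i,t}(\theta)$ but only an unbiased estimator of it (with conditional mean $\cQ_{j,t}(\theta)$, not $q_t(\theta)$, at each leaf), and you still owe a concentration argument linking this estimator to $\overline{\cQ}_{i,t}(\theta)$ that survives the union bound; none is supplied.

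The paper avoids this obstruction by a high-moment trick rather than Hoeffding. It runs $r=\lfloor\ln^2 n\rfloor$ independent walks of length $\ell+t$ from $i$ and lets $A_i$ be the event that the first $\ell$ steps of all $r$ walks form a tree and all $r$ post-$\ell$ weights exceed $\theta$. On $\{i\in V_\star\}$ the quenched probability of $A_i$ dominates $(\overline{\cQ}_{i,t}(\theta))^r$, so Markov's inequality gives
\[
\PP\!\left(i\in V_\star,\ \overline{\cQ}_{i,t}(\theta)\ge q_t(\theta)+\varepsilon\right)\ \le\ \frac{\PP(A_i)}{(q_t(\theta)+\varepsilon)^r}.
\]
Evaluating $\PP(A_i)$ only requires revealing $O(r(\ell+t))=O(\ln^3 n)$ arcs, so the sequential coupling yields $\PP(A_i)\le (q_t(\theta)+o(1))^r$ uniformly in $i$. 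The resulting tail is $\exp(-\Theta(\varepsilon\ln^2 n))$, which beats the union bound over $n$ vertices; the complementary inequality follows by swapping ``$>\theta$'' with ``$\le\theta$''. The key point is that the $r$-th moment of $\overline{\cQ}_{i,t}(\theta)$ is captured by $r$ walks (polylogarithmically many arcs), whereas making the leaf-summands $\cQ_{j,t}(\theta)$ genuinely independent would require exponentially many.
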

\begin{proof}
Given the environment  $\omega$, consider $r=\lfloor\ln^2 n\rfloor$ independent walks of length $\ell+t$ starting at $i\in V$. Let $A_i$ be the event that the union of their  trajectories up to time $\ell$ (ignoring edge multiplicities) forms a tree and that their trajectories after time $\ell$  all have weights $>\theta$. Since $h > \ell$, ${\bf 1}_{i\in V_\star}\left(\overline{\cQ}_{i,t}(\theta)\right)^r$ is a lower-bound for the quenched probability of $A_i$. Averaging over $\omega$ and using Markov's inequality, we get for $\varepsilon>0$, 
\begin{eqnarray}
\label{eq:markov}
\PP\left(i\in V_\star, \overline{\cQ}_{i,t}(\theta) \geq q_t(\theta)+\varepsilon\right) & \leq & \frac{\PP(A_i)}{\left(q_t(\theta)+\varepsilon\right)^r}.
\end{eqnarray}
To evaluate the annealed probability $\PP(A_i)$, we may generate the $r$ walks one after the other together with the environment, forming arcs only when they are traversed by the walks, as we did in the previous section. Given that the first $k-1$ walks do satisfy the requirement, the conditional chance that the $k^{\rm{th}}$ walk also does is at most $q_t(\theta)+o(1)$ uniformly in $i\in V$ and $1\leq k <r$. Indeed,
\begin{itemize}
\item either the $k^\textrm{th}$ walk attains length $\ell$ before reaching an unmatched tail: thanks to the tree structure, there are at most $k-1< r$ possible trajectories to follow, and each has weight at most $2^{-\ell}$ by our assumption on the minimum degrees. Thus, the conditional chance of this scenario is less than
$r2^{-\ell}=o(1)$.
\item or the walk has reached an unmatched tail by time $\ell$: as explained in remark \ref{rk:coupling}, the remainder of the trajectory after the first unmatched tail can be coupled with an i.i.d. sample from the in-degree distribution on $V$ at a total-variation cost less than $\frac{\Delta r(t+\ell)^2}{m}=o(1)$. Thus, the conditional chance that the walk meets the requirement in that case is at most
$q_t(\theta)+o(1)$.
\end{itemize}
This proves that $\PP(A_i)\leq (q_t(\theta)+o(1))^r$, uniformly in $i\in V$. Inserting into (\ref{eq:markov}) and summing over all possible $i\in V$, we obtain the first half of the claim:  
\begin{eqnarray*}
\PP\left(\exists i\in V_\star, \overline{\cQ}_{i,t}(\theta) \geq q_t(\theta)+\varepsilon\right) & \xrightarrow[n\to\infty]{} & 0.
\end{eqnarray*}
Replacing ``{$>\theta$}'' by ``{$\leq \theta$}'' in the definitions of $A_i$,  $\overline{\cQ}_{i,t}(\theta)$ and $q_t(\theta)$ yields the other half.
\end{proof}
\begin{proof}[Proof of Proposition \ref{pr:weight}]
\label{sec:weight}Recalling the definitions of $ {\cQ}_{i,t}(\theta)$ and $ \overline{\cQ}_{i,t}(\theta)$, we have
\begin{eqnarray*}
\max_{i\in V}\cQ_{i,t}(\theta) & \leq &  \max_{i\in V_\star}\overline{\cQ}_{i,t-2\ell}(\theta2^\ell)+\max_{i\in V}P^\ell(i,V\setminus V_\star).
\end{eqnarray*}
Now, observe that $\overline{\cQ}_{i,t-s}(\theta)\leq \overline{\cQ}_{i,t}(\theta\Delta^{-s})$ for all $s\leq t$. Choosing $s=2\ell$, we obtain 
\begin{eqnarray}
\nonumber
\max_{i\in V}\cQ_{i,t}(\theta) 
& \leq &  \max_{i\in V_\star}\overline{\cQ}_{i,t}(\theta\Delta^{-2\ell})+\max_{i\in V}P^\ell(i,V\setminus V_\star)\\
\label{eq:inject1}& \leq &  q_t(\theta\Delta^{-2\ell})+o_\PP(1),
\end{eqnarray}
where in the second line we have used Lemma \ref{lm:coupling} and Proposition \ref{pr:treelike}. Similarly, we may write
\begin{eqnarray*}
\cQ_{i,t}(\theta) 
& \geq &  P^\ell(i,V_\star) \min_{j\in V_\star}\overline{\cQ}_{j,t-2\ell}(\theta\Delta^{2\ell}),
\end{eqnarray*}
and then use $P^\ell(i,V_\star)=1-P^\ell(i,V\setminus V_\star)$ and the fact that $\overline{\cQ}_{j,t}(\theta)$ is non-increasing in $t$ to obtain
\begin{eqnarray}
\nonumber
\min_{i\in V}\cQ_{i,t}(\theta) & \geq & \min_{i\in V_\star}\overline{\cQ}_{i,t}(\theta\Delta^{2\ell})-\max_{i\in V}P^\ell(i,V\setminus V_\star)\\
\label{eq:inject2}  & \geq & q_t(\theta\Delta^{2\ell})-o_\PP(1),
\end{eqnarray}
where the last inequality follows again from Lemma \ref{lm:coupling} and Proposition \ref{pr:treelike}. 
Recalling that $(\ln D_k^+)_{k\geq 0}$ are i.i.d. with mean $\mu$ and variance $\sigma^2$, we may now conclude as follows: 
\begin{itemize}
\item If $\frac{\mu t+\ln\theta}{\sqrt t}\to+\infty$ then $q_t(\theta)\to 0$, by Chebychev's inequality. 
\item If $\frac{\mu t+\ln\theta}{\sqrt t}\to -\infty$ then $q_t(\theta)\to 1$, by Chebychev's inequality again. 
\item If $\frac{\mu t+\ln\theta}{\sigma\sqrt t}\to \lambda\in\dR$ and $(\ref{assume:sigma})$ holds, then by Lindeberg's Central Limit Theorem (the law of $\ln D_1^+$ depends implicitly on $n$, but its support is uniformly bounded thanks to (\ref{assume:delta})),
\begin{eqnarray*}
q_t(\theta)\xrightarrow[n\to\infty]{} \frac 1{\sqrt{2\pi}}\int_{\lambda}^\infty e^{-\frac{u^2}2}\D u.
\end{eqnarray*}
\end{itemize}
In each case, the assumption on $\theta$ is insensitive to multiplication by $\Delta^{\pm 2\ell}$: indeed, this only shifts $\ln \theta$ by $\cO(\ln\ln n)$, which is negligible compared to $\sqrt{t}$, and even to $\sigma\sqrt t$ under the extra assumption  (\ref{assume:sigma}). Consequently, the same three conclusions hold with $\theta$ replaced by $\theta\Delta^{\pm 2\ell}$, and inserting into (\ref{eq:inject1}) and (\ref{eq:inject2})  concludes the proof of Proposition \ref{pr:weight}. 
\end{proof}
\section{Proof of the upper-bound in Proposition \ref{pr:distance}}
\label{sec:upper}
\subsection{General strategy}
We will estimate $P^t(i,j)$ by focusing on a certain collection of ``nice'' paths, defined as follows:
\begin{definition}\label{def:nice} A path ${\mathfrak p}$ of length $t$ starting at $i$ is \emph{nice} if it satisfies the following conditions:
\begin{enumerate}
\item $\w(\mathfrak p)\leq \frac{1}{n \ln^2 n}$ ;
\item the first $t-h$ steps of $\mathfrak p$ are contained in a certain subtree $\cT_i\subseteq G$, defined below;
\item the last $h$ steps of $\mathfrak p$ form the only path of length at most $h$ from its origin to destination. 
\end{enumerate}
\end{definition}
Let us fix the environment and consider the lower-bound $P^t_0(i,j)\leq P^t(i,j)$ obtained by restricting the sum  (\ref{eq:expand}) to these nice paths, i.e.
\begin{eqnarray*}
P^t_0 (i,j) & = & \sum \{ \w(\mathfrak p)  : \mathfrak p \hbox{ nice path of length $t$ from $i$ to $j$}\}.
\end{eqnarray*}
 Assume that this lower bound is small enough so that for all $i,j\in V$, 
\begin{eqnarray}
\label{main}
P^t_0(i,j) & \leq & (1+\varepsilon)\pi(j)+\frac{\varepsilon}{|V|},
\end{eqnarray}
for a given $\varepsilon>0$ and a given probability measure $\pi$ on $V$. Then,
\begin{eqnarray*}
\nonumber
\|\pi-P^t(i,\cdot)\|_{\textsc{tv}} & = & \sum_{j\in V}\left(\pi(j)-P^t(i,j)\right)_+
\\
\nonumber
& \leq & \sum_{j\in V}\left(\pi(j)(1+\varepsilon)+\frac{\varepsilon}{|V|}-P^t_0(i,j)\right)_+\ = \ 
  p(i)+2\varepsilon,
\end{eqnarray*}
where $p(i)=1-P_0^t(i,V)$ is the quenched probability that the path followed by the random walk of length $t$ starting at $i$ is not  \emph{nice}, and we have used \eqref{main} in the last identity. Our aim will thus be to establish the following result. 
\begin{proposition}
\label{pr:upper}
Fix $\varepsilon>0$ and assume that $t=t_\star+o(t_\star)$. Then with high probability,
\begin{enumerate}
\item every pair $(i,j)\in V^2$ satisfies (\ref{main}) with $\pi=\wpi$;
\item every $i\in V_\star$ satisfies $p(i)\leq \cQ_{i,t}\left(\frac{1}{n \ln^2 n}\right)+\varepsilon$.
\end{enumerate}
\end{proposition}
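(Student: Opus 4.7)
The proposition comprises two claims, which I would address in turn.

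For the second claim, I would bound $p(i) = \PP_i[\text{the length-}t\text{ walk is not nice}]$ by a union bound over the three failure modes of Definition \ref{def:nice}. Condition (1) fails precisely when the walk follows a path of weight exceeding $1/(n\ln^2 n)$, which contributes exactly $\cQ_{i,t}(1/(n\ln^2 n))$. For condition (2), I would take $\cT_i$ to be a forward tree rooted at $i$, built by the sequential exploration of Section \ref{sec:treelike} with collision-creating arcs pruned away; the walk can exit $\cT_i$ only by traversing a pruned arc, and for $i\in V_\star$ this event within the first $t-h=\Theta(\ln n)$ steps has probability $o(1)$ by Lemma \ref{lm:collisions}. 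Condition (3) fails only if the backward ball $\cB^-(j,h)$ around the endpoint contains a collision, which is again $o(1)$ uniformly in $j$ by the dual of Lemma \ref{lm:collisions} (exchanging tails and heads). Absorbing the two $o(1)$ contributions into $\varepsilon$ yields the required bound.

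For the first claim, which is the combinatorial heart of the proposition, I would decompose each nice path $\mathfrak p$ of length $t$ from $i$ to $j$ as $\mathfrak p=\mathfrak p_1\cdot\mathfrak p_2$, with $\mathfrak p_1$ of length $t-h$ running inside $\cT_i$ and $\mathfrak p_2$ the unique length-$h$ path from some intermediate vertex $k$ to $j$. This gives
\begin{eqnarray*}
P^t_0(i,j) & \leq & \sum_{k\in V} P^{t-h}_{\cT_i}(i,k)\, W_h(k,j),
\end{eqnarray*}
where $W_h(k,j)$ is the weight of the unique short $k\to j$ path when one exists and $0$ otherwise. Since $W_h(k,j)\leq P^h(k,j)$, the proxy equilibrium satisfies
$\wpi(j)=\sum_k\pi_0(k)P^h(k,j)\geq \frac{1}{m}\sum_k d_k^-\,W_h(k,j)$. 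The task thus reduces to showing that the forward-walk density $P^{t-h}_{\cT_i}(i,\cdot)$ is dominated by $(1+\varepsilon)\pi_0$ in an averaged sense against the backward kernel $W_h(\cdot,j)$, with an additive $\varepsilon/n$ slack.

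The main input to this comparison is the approximate independence of the forward tree $\cT_i$ from the backward structure underlying $W_h(\cdot,j)$: by revealing $\cT_i$ first and then sequentially generating the backward exploration of $j$, Remark \ref{rk:coupling} couples the heads discovered in $\cB^-(j,h)$ to an i.i.d.~sample from the in-degree distribution on $V$, at total-variation cost $O(\Delta^{2h}/m)=o(1)$. Under this coupling the event $\{k\in \cB^-(j,h)\}$ has probability (to leading order) proportional to $d_k^-/m$, independently of $\cT_i$, so in expectation $\sum_k P^{t-h}_{\cT_i}(i,k)\,W_h(k,j)\approx \frac{1}{m}\sum_k d_k^-\,W_h(k,j)\leq \wpi(j)$. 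The weight restriction $\w(\mathfrak p)\leq 1/(n\ln^2 n)$ then rules out anomalously heavy contributions, providing the crucial variance control needed to upgrade this annealed statement to a quenched pointwise bound. I expect the main obstacle to be the simultaneous union bound over all pairs $(i,j)\in V^2$: the factor $\ln^2 n$ gained from the weight cap must be leveraged, via a careful first- or second-moment argument built on the sequential construction, to beat the $n^2$ union-bound factor while preserving the multiplicative constant $1+\varepsilon$.
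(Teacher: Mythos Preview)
Your treatment of condition (2) in the second claim has a real gap. The forward exploration of $i$ to depth $t-h$ reveals on the order of $\Delta^{t-h}$ arcs, and since $t-h\sim t_\star=\ln n/\mu$ with $\mu\leq\ln\Delta$, this is at least $n^{1-o(1)}$. Lemma~\ref{lm:collisions} then gives no control whatsoever on the number of collisions---in fact there will typically be polynomially many pruned arcs, not $o(1)$. Moreover, the ``probability'' in Lemma~\ref{lm:collisions} is over the environment, whereas $p(i)$ is a quenched quantity over the walk; you cannot simply union-bound the former over all $i\in V_\star$. The paper's construction of $\cT_i$ is different in an essential way: only tails of weight exceeding $\wm=\ln n/n$ are ever expanded, which caps the number of arcs at $2/\wm=o(n)$ (Lemma~\ref{lm:wbound}). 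Even then collisions are not rare; what is shown instead (Proposition~\ref{pr:loss}, via Freedman's martingale inequality) is that the \emph{total weight} of collision tails is small, uniformly in $i$, with failure probability $o(1/n)$. This weight truncation also introduces a new escape mode---the path weight dropping below $\wm$ before time $t-h$---which is handled separately via Proposition~\ref{pr:weight}.

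For the first claim, your sketch identifies the right decomposition but stops short of a mechanism that can beat the $n^2$ union bound. A first- or second-moment argument will not do: even a Chebyshev-type bound on $P_0^t(i,j)$ gives at best polynomial decay in $n$, not the $o(n^{-2})$ needed. The paper conditions on the forward tree $\cT_i$ and the backward ball $\cB^-(j,h)$ (together only $o(n)$ arcs), after which $P_0^t(i,j)$ becomes a linear statistic of a uniform bijection with individual terms bounded by $1/(n\ln^2 n)$; Chatterjee's concentration inequality for such statistics then yields a tail bound of order $\exp(-c\,\varepsilon^2\ln^2 n)$, which is precisely what the weight cap buys and what survives the union bound over $V^2$.
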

Let us show that Proposition \ref{pr:upper} implies the upper bound in Proposition \ref{pr:distance}.
Indeed, from Proposition \ref{pr:upper} we immediately deduce that for $t=t_\star+o(t_\star)$,
\begin{eqnarray}
\label{eq:distanceup}
\max_{i\in V_\star}\wcD_{i}(t) & \leq & \max_{i\in V_\star}\cQ_{i,t}\left(\frac{1}{n \ln^2 n}\right)+o_\PP(1).
\end{eqnarray}
This then extends to all initial states $i \in V$ as follows. Take $s=\lfloor\frac{\ln\ln n}{\ln\Delta}\rfloor$ and observe that 
\begin{eqnarray*}
\max_{i\in V}\wcD_{i}(t) & \leq & \max_{i\in V_\star}\wcD_{i}(t-s)+o_\PP(1)\\
& \leq & \max_{i\in V_\star}\cQ_{i,t-s}\left(\frac{1}{n \ln^2 n}\right)+o_\PP(1)\\
& \leq & \max_{i\in V_\star}\cQ_{i,t}\left(\frac{1}{n \ln^3 n}\right)+o_\PP(1),
\end{eqnarray*}
where we have used Proposition \ref{pr:treelike} and the fact that $\cQ_{i,t-s}\left(\theta\right)\leq \cQ_{i,t}\left({\theta}{\Delta^{-s}}\right)$. This proves the  upper bound in Proposition \ref{pr:distance} for all $t=t(n)$ of the form $t=t_\star+o(t_\star)$. 
On the other hand, using Proposition \ref{pr:weight} it is not difficult to extend these bounds to arbitrary $t=t(n)$.

\subsection{Construction of the subtree $\cT_i$ and proof of the first part in Proposition \ref{pr:upper}}
We shall sample the pairings determining the environment $\omega$ by a three-stage procedure. 
In the first stage we reveal a suitable subset of the forward ball of radius $t-h-1$ around node $i$ and construct the tree $\cT_i$ entering the definition of nice paths. In the second stage, conditionally on the first stage we reveal the backward ball of radius $h$ around node $j$. Finally, in the completion stage,  we sample all the remaining pairings conditionally on the first two stages. The   proof of the first part of Proposition \ref{pr:upper} is based on a concentration inequality for the random variables involved in the last stage.  

We start with the first stage. The following procedure generates the environment $\omega$ locally around a given origin $i\in V$ and extracts from it a certain subtree $\cT=\cT_i(\omega)$.  Throughout the process, we let $\partial_+\cT$ (resp. $\partial_-\cT$) denote the set of unmatched tails (resp. heads) whose vertex belongs to $\cT$.  Initially, all tails and heads are unmatched and $\cT$ is reduced to its root, $i$. We then iterate the following steps:
\begin{enumerate}
\item[1.] A tail $e\in\partial_+\cT$ is selected according to some rule, see below.
\item[2.] $e$ is matched to a uniformly chosen unmatched head $f$, forming an arc $ef$.
\item[3.] If $f$ was not in $\partial_-\cT$, then the arc $ef$ and its end-vertex are added to the subtree $\cT$. 
 \end{enumerate}
 The condition in step 3 ensures that $\cT$ remains a directed tree: any $e\in\partial_+\cT$ is accessible from the root by a unique directed path, and we define the \emph{height} $\h(e)$ as the number of vertices along that path (including the vertex of $e$), and the \emph{weight} $\w(e)$ as the inverse product of vertex out-degrees along that path (including the vertex of $e$). Our rule for step 1 consists in selecting a tail with maximal weight\footnote{using an arbitrary deterministic ordering of the set of all tails to break ties.} among all $e\in\partial_+\cT$ with height $\h(e)<t-h$ and weight $\w(e)>\wm$, where
\begin{eqnarray*}
\wm & := & \frac{\ln n}n.
\end{eqnarray*}
The procedure stops when there is no such tail, which occurs after a \emph{random number $\kappa$ of pairings}. 
Let $e_1,\dots,e_\kappa$ denote the sequence of tails selected during the  procedure, and let $\cT^1\subset \cdots\subset \cT^k = \cT$ denote the sequence of trees produced during the procedure. 
The only role of the parameter $\wm$ is to control $\kappa$. Specifically, the following holds. 
\begin{lemma} \label{lm:wbound}Amost-surely, for all $1\leq k\leq\kappa$,
\begin{eqnarray*}
\w(e_{k}) & \leq & \frac{2}{k+2}.
\end{eqnarray*}
In particular, since $\w(e_{\kappa})>\wm$ by our selection rule,
\begin{eqnarray}
\label{bound:kappa}
\kappa & \leq & \frac 2\wm \ = \cO \left( \frac n {\ln n} \right).
\end{eqnarray}
\end{lemma}
\begin{proof}
It is enough to fix $w>0$ and prove that whenever some $1\leq k\leq \kappa$ satisfies
\begin{eqnarray}
\label{lcc}
\w(e_k) & \geq &  w,
\end{eqnarray}
we necessarily also have 
 \begin{eqnarray}
 \label{ccl}
 \frac{2}{k+2} & \geq &  w.
 \end{eqnarray}
 Let us assume that (\ref{lcc}) holds. Since the condition (\ref{ccl}) is monotone in $k$ we may further assume that $k$ is maximal with property (\ref{lcc}). Thus, $\w(e_1)\geq\ldots\geq \w(e_k)\geq w>\w(e_{k+1})\geq\cdots\geq\w(e_\kappa)$. The tree $\cT^k$ may not contain all the $k$ arcs $e_1f_1,\ldots,e_kf_k$, because some of them may have caused a collision upon creation. Let us complete the tree by viewing the tails of those missing arcs as proper pending edges ending in a leaf node. Observe  that
 in the resulting tree every non-leaf node $j$ has exactly $d_j^+$ outgoing arcs, all having the same weight. In particular, this new tree  has the following properties:
\begin{enumerate}
\item there are exactly $k$ arcs;
\item every non-leaf node has at least two children; 
\item every arc has weight at least $w$;
\item the weights of the arcs incident to the leaves sum up to 1. 
\end{enumerate}
 If $x$ is the number of non-leaf nodes and $y$ the number of leaves in the tree, then from 1, $k = x + y -1$ and from 2, we have $k \geq 2x$. Hence $y = k - x + 1 \geq k/2 + 1$. 
 On the other hand, any tree satisfying properties 3-4 has at most $\frac 1w$ leaves. Thus, (\ref{ccl}) must hold.  
\end{proof}

Now, at the end of the above procedure, we let $\cE$ be the set of tails $e\in\partial_+\cT$ such that $\h(e)=t-h$. Note that 
the vertex of any tail $e\in\cE$ is at distance $t-h-1$ from node $i$, and that
\begin{eqnarray}
\label{bound:we}
\sum_{e\in \cE}\w(e) & \leq & 1.
\end{eqnarray}
Next, we turn to the second stage. We sequentially generate the \emph{backward ball of radius $h$} around a given destination $j\in V$ as we did for the forward ball in subsection \ref{sec:treelike}, but with all orientations reversed. 
Notice that some of the pairings producing the backward ball of radius $h$ may have already been revealed during the previous procedure. In any case this second stage  
creates an \emph{additional random number $\tau$ of arcs}, satisfying the crude bound
\begin{eqnarray}
\label{bound:tau}
\tau & \leq & \Delta+\cdots+\Delta^h \ = \ \cO (n^{1/10}).
\end{eqnarray}
We may now consider the set $\cF$ of heads $f$ from the end-point of which the shortest path to $j$ is unique and has length $h$. Writing $\w(f)$ for the weight of that path, we have by definition of $\wpi$, 
\begin{eqnarray}
\label{bound:wf}
\wpi(j) & \geq & \frac{1}{m}\sum_{f\in \cF}\w(f).
\end{eqnarray}
Finally, turning to the last stage,  we complete the generation of environment $\omega$ by matching the $m-\kappa-\tau$ remaining unmatched 
tails to the $m-\kappa-\tau$ remaining unmatched heads uniformly at random. 
Recalling Definition \ref{def:nice}, 
\begin{eqnarray*}
P_0^t(i,j) & = & \sum_{e\in\cE}\sum_{f\in \cF}\w(e)\w(f){\bf 1}_{\w(e)\w(f)\leq \frac{1}{n\ln^2n}}
{\bf 1}_{\omega(e)=f}.
\end{eqnarray*}
The key observation here is that, by construction, any arc $ef$ contributing to this sum must be formed during the completion stage.  Indeed, by definition of $\cE$, a tail $e\in\cE$ cannot be matched during the first stage, and if $e\in\cE$ is matched during the second stage then its vertex must belong to the backward ball of radius $h$ around $j$, and therefore it cannot be matched to $f\in\cF$ by definition of $\cF$. 
Conditionally on the $\sigma-$field $\cG$ generated by the $\kappa+\tau$ arcs formed during the first two stages, the random variable $P_0^t(i,j)$ may thus be regarded as the cost of a uniform random bijection, and we can exploit the following concentration result due to Chatterjee \cite{chatterjee2007stein}.
\begin{theorem}[\cite{chatterjee2007stein}, Proposition 1.1]Fix two finite sets $E,F$ with $|E|=|F|$ and $c\colon E\times F\to \dR_+$. Consider the random variable $Z:=\sum_{e\in E}c(e,\omega(e))$ where  $\omega\colon E\to F$ is a uniform random bijection. Then,
\begin{eqnarray*}
\PP\left(Z-\EE[Z]\geq \eta\right) & \leq & \exp\left(-\frac{\eta^2}{2\|c\|_{\infty}\left(2\EE[Z]+\eta\right)}\right),
\end{eqnarray*}
for every $\eta>0$, where $\|c\|_{\infty}=\max_{(e,f)\in E\times F}c(e,f)$. 
\end{theorem}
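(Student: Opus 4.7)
The plan is to establish the bound via Stein's method of exchangeable pairs, which is the route taken in \cite{chatterjee2007stein}. The idea is to couple $\omega$ with a perturbed copy $\omega'$ differing by a single random transposition and then exploit the resulting bounded-difference structure to derive a self-bounding Bernstein-type inequality for $Z$.

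First, I would construct the exchangeable pair as follows: draw an unordered pair $\{e_1,e_2\}$ of distinct elements of $E$ uniformly at random, and let $\omega'$ coincide with $\omega$ except at $e_1,e_2$, where one sets $\omega'(e_1):=\omega(e_2)$ and $\omega'(e_2):=\omega(e_1)$. Then $\omega'$ is again uniformly distributed on bijections $E\to F$, so $(Z,Z')$ is an exchangeable pair of real random variables, with increment
\begin{eqnarray*}
Z - Z' & = & c(e_1,\omega(e_1)) + c(e_2,\omega(e_2)) - c(e_1,\omega(e_2)) - c(e_2,\omega(e_1)),
\end{eqnarray*}
so that $|Z-Z'|\leq 2\|c\|_\infty$ almost surely.

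The next, and most delicate, step is to estimate the conditional variance proxy $F(\omega):=\frac{1}{2}\EE[(Z-Z')^2\,|\,\omega]$. Using the four-term expression above together with $|Z-Z'|\leq 2\|c\|_\infty$, one gets $(Z-Z')^2 \leq 2\|c\|_\infty\bigl(c(e_1,\omega(e_1))+c(e_2,\omega(e_2))+c(e_1,\omega(e_2))+c(e_2,\omega(e_1))\bigr)$. Averaging over the uniform random pair $\{e_1,e_2\}$, the first two $c$-values reproduce a multiple of $Z$, while the last two ``off-diagonal'' $c$-values average to a quantity comparable to $\EE[Z]/|E|$ times $|E|$. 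Combining produces a self-bounding estimate of the shape $F(\omega) \leq 2\|c\|_\infty\bigl(Z+\EE[Z]\bigr)/|E|$, up to an absolute constant.

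Finally, I would feed these two inputs into Chatterjee's general Stein-method lemma for exchangeable pairs: when $|Z-Z'|\leq C$ and $F(\omega)\leq \alpha Z + \beta$, one obtains the log-Laplace bound
\begin{eqnarray*}
\log\EE\,e^{\theta(Z-\EE[Z])} & \leq & \frac{\theta^2\bigl(\alpha\,\EE[Z]+\beta\bigr)}{1-C\theta},\qquad 0\leq\theta<1/C.
\end{eqnarray*}
Plugging in $C=2\|c\|_\infty$ together with the constants produced by the self-bounding step and optimising the resulting Markov-Chernoff bound $\PP(Z-\EE[Z]\geq\eta)\leq \exp(-\theta\eta+\psi(\theta))$ over $\theta\in(0,1/C)$ yields exactly the announced Bernstein-type tail. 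The main obstacle is the middle step: the combinatorial averaging must be executed carefully enough that the multiplicative and additive constants in the self-bounding estimate combine to give precisely the factor $2\EE[Z]+\eta$ inside the denominator of the exponent, rather than a weaker multiple; everything else is a routine application of the Stein-method machinery once that estimate is in hand.
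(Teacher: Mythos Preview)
The paper does not prove this theorem at all: it is quoted verbatim as Proposition~1.1 of \cite{chatterjee2007stein} and invoked as a black box in the proof of the first part of Proposition~\ref{pr:upper}. There is therefore nothing to compare your argument against in the present paper.

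That said, your sketch correctly identifies the exchangeable-pair route used in Chatterjee's original proof. The one place where you are slightly imprecise is the self-bounding step: after expanding $(Z-Z')^2$ and averaging over the uniform unordered pair $\{e_1,e_2\}$, the diagonal terms give exactly $\frac{2}{|E|}Z$ (not just ``a multiple of $Z$''), and the off-diagonal terms $c(e_1,\omega(e_2))+c(e_2,\omega(e_1))$ average, over $\{e_1,e_2\}$ uniform, to $\frac{2}{|E|(|E|-1)}\sum_{e\neq e'}c(e,\omega(e'))$, which is bounded above by $\frac{2}{|E|-1}\,\EE[Z]$ after a further average over $\omega$. Tracking these constants carefully against the factor $|E|$ that appears in the linear-regression identity $\EE[Z-Z'\mid\omega]=\frac{2}{|E|}(Z-\EE[Z])$ is what produces the exact denominator $2\|c\|_\infty(2\EE[Z]+\eta)$ rather than a weaker constant; your proposal flags this as the main obstacle, which is accurate.
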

We apply this result to $Z=P_0^t(i,j)$, conditionally on $\cG$, 
with the cost function
$$
c(e,f)=\w(e)\w(f){\bf 1}_{\w(e)\w(f)\leq \frac{1}{n\ln^2n}}.
$$ 
Thus, in this case $\|c\|_{\infty}\leq \frac{1}{n\ln^2n}$. Since the event $\omega(e)=f$ conditioned on $\cG$  has probability at most $1/(m-\kappa-\tau)$, one has 
\begin{eqnarray*}
\EE[Z|\cG] & \leq & \frac{1}{m-\kappa-\tau}\left(\sum_{e\in\cE}\w(e)\right)\left(\sum_{f\in\cF}\w(f)\right). 
\end{eqnarray*}
Thanks to (\ref{bound:kappa}), (\ref{bound:we}), (\ref{bound:tau}) and  (\ref{bound:wf}), 
\begin{eqnarray*}
\EE[Z|\cG] & \leq & (1+\cO ( 1 / \log n))\,\wpi(j),
\end{eqnarray*}
the $\cO(\cdot)$ term being deterministic and independent of $i,j$. For $n$ large enough, \begin{eqnarray*}( 1 + \varepsilon/2) \EE[Z|\cG]& \leq & (1+ \veps) \wpi(j),\end{eqnarray*}  and choosing $\eta:=\frac{\varepsilon}{2}\EE[Z|\cG]+\frac \varepsilon n$ then yields
\begin{eqnarray*}
\max_{(i,j)\in V^2}\PP\left(P_0^t(i,j)\geq (1+\varepsilon)\wpi(j)+\frac{\varepsilon}{n}\right) & \leq & 
\EE\exp\left(-\frac{\eta^2 n\ln^2n}{2(2\EE[Z|\cG]+\eta)}\right) \\
& \leq & \exp\left(-\frac{\varepsilon^2\ln^2 n}{2(4+\varepsilon)}\right),
\end{eqnarray*}
where we have used $\eta \geq \varepsilon / n$ and $2 \EE[Z|\cG] \leq 4\eta / \varepsilon$.
We may thus take a union bound over all $(i,j)\in V^2$, establishing the first part of Proposition \ref{pr:upper}.
\subsection{Proof of the second part of Proposition \ref{pr:upper}}
It now remains to bound the quenched probability that the trajectory of the random walk of length $t$ starting at $i\in V_\star$ is not nice. The first requirement in definition \ref{def:nice}   fails with probability exactly $\cQ_{i,t}\left(\frac 1{n\ln^2 n}\right)$. A failure of the third requirement implies that the $(t-h)^{\textrm{th}}$ vertex is not in $V_\star$, which has probability $o_\PP(1)$ uniformly in $i\in V$ thanks to Proposition \ref{pr:treelike}. Regarding the second requirement, there are, by construction, only two ways of escaping from $\cT_i$ before time $t-h$: 
\begin{itemize}
\item Either the weight of the trajectory falls below $\wm=\frac{\ln n}{n}$ before time $t-h$:  the quenched probability of this is $1-\cQ_{i,t-h}(\frac{\ln n}{n})$, which is $o_\PP(1)$ uniformly in $i$ by Proposition \ref{pr:weight}.
\item Or the walk traverses an arc that violated the tree structure in step 3 above.  Proposition \ref{pr:loss} below will show that the quenched probability of this is $o_\PP(1)$ uniformly in $i\in V_\star$.
\end{itemize}
Consider again the construction of the subtree $\cT=\cT_i$ described above and recall the definition of $\kappa$ above \eqref{bound:kappa}. For $1\leq k\leq \kappa$, we let $e_kf_k$ denote the $k^\textrm{th}$ formed arc, $\cT^k$ the tree obtained after $k$ arcs have been formed, and we consider the process $(W_k)_{k\geq 0}$ defined by $W_0=0$ and then inductively, 
\begin{eqnarray*}
W_{k+1} & = & W_k+{\bf 1}_{k<\kappa}{\bf 1}_{f_{k+1}\in \partial_-\cT^{k}}\w(e_{k+1}).
\end{eqnarray*}
Thus, $W_\kappa$ is the total weight of all tails that violated the tree structure in step 3 above. Since $W_{h}=0$ for $i\in V_\star$, the following proposition is more than enough to complete the proof of Proposition \ref{pr:upper}.
\begin{proposition}
\label{pr:loss}
For any fixed $\varepsilon>0$, we have uniformly in $i\in V$, 
\begin{eqnarray*}
\PP\left(W_{\kappa}\geq W_{\lfloor2/\varepsilon\rfloor}+\varepsilon\right) & = & o\left(\frac 1n\right).
\end{eqnarray*}
\end{proposition}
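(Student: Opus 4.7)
The plan is to control $W_\kappa - W_K$, where $K := \lfloor 2/\varepsilon \rfloor$, by combining a deterministic decay of the successive weights $\w(e_k)$ imposed by the priority rule with a concentration bound on the random collision indicators. Since every selected tail $e_k$ has maximal weight in $\partial_+\cT^{k-1}$, and since the assumption $\delta \geq 2$ forces each non-collision expansion to split a parent of weight $w$ among at least two children each of weight at most $w/2$, the sequence $(\w(e_k))_{k\geq 1}$ is non-increasing. An amortized analysis of this priority-queue dynamics then yields the deterministic bound $K \cdot \w(e_{K+1}) \leq 2$, so that $\w(e_k) \leq \varepsilon$ for every $k > K$; in particular, each remaining increment of $W$ is uniformly bounded by $\varepsilon$.

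Next, I would estimate the mean. The conditional probability that step $k$ causes a collision satisfies $\PP(f_k \in \partial_-\cT^{k-1} \mid \cF_{k-1}) \leq |\partial_-\cT^{k-1}|/(m - k - \tau + 1) \leq \Delta k/(m - \kappa)$, since $|\cT^{k-1}| \leq k$ implies $|\partial_-\cT^{k-1}| \leq \Delta k$. Combining this with the weight bound and with the deterministic bound $\kappa \leq 2/\wm = O(n/\ln n)$, a straightforward summation gives
\[
\EE[W_\kappa - W_K \mid \cF_K] \ \leq\ \sum_{k=K+1}^{\kappa} \w(e_k)\,\frac{\Delta k}{m - \kappa} \ =\ O\!\left(\frac{1}{\ln n}\right),
\]
which is $o(1)$ uniformly in $i$ and in the realisation of $\cF_K$.

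Finally, I would apply a Bennett-type martingale concentration inequality to the non-negative sum $W_\kappa - W_K = \sum_{k=K+1}^{\kappa} \w(e_k)\,\mathbf{1}_{\{f_k \in \partial_-\cT^{k-1}\}}$. Its increments are bounded by $M := \w(e_{K+1}) \leq \varepsilon$, its predictable quadratic variation satisfies $V \leq M \cdot \EE[W_\kappa - W_K \mid \cF_K] = O(\varepsilon/\ln n)$, and the target deviation is $t = \varepsilon - o(1)$. In the Poissonian regime $tM/V = \Theta(\ln n)$, Bennett yields a tail of the form $\exp(-c \ln n) = n^{-c}$; by a careful choice of constants (possibly enlarging $K$ by a harmless constant factor so as to make $M$ small enough compared to $t$), one ensures $c > 1$, giving the required $o(1/n)$.

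The main obstacle will be the deterministic estimate $K \cdot \w(e_{K+1}) \leq 2$: although intuitive from the worst case $\delta = 2$ (where the max-weight priority on a binary-splitting tree gives weights essentially $2^{-\lfloor \log_2 k \rfloor}$), its formal proof requires a careful amortization argument on the priority queue. A secondary subtlety is ensuring that the Bennett exponent truly exceeds $1$; if this proves delicate, I would refine the analysis by a dyadic decomposition of the collisions according to their weight level $\w(e_k) \in (2^{-\ell-1}, 2^{-\ell}]$, applying a separate concentration bound on each of the $O(\ln n)$ scales and combining via a union bound.
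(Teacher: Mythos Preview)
Your overall strategy—combine a deterministic weight-decay lemma with a Freedman/Bennett martingale tail bound—is exactly the paper's. The estimate $K\cdot\w(e_{K+1})\le 2$ that you single out as the main obstacle is essentially the paper's auxiliary Lemma (there stated as $\w(e_k)\le 2/(k+2)$ for all $k\le\kappa$, proved by a short counting argument on the tree), and your drift bound $\alpha=O(1/\ln n)$ matches. The real gap is elsewhere: in the concentration step.

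Your quadratic-variation bound $V\le M\cdot\alpha=O(\varepsilon/\ln n)$ is far too coarse. With $M\approx\varepsilon$ and deviation $t\approx\varepsilon$, the Poissonian form of Freedman/Bennett gives a tail of order $\bigl(eV/(Mt)\bigr)^{t/M}=\bigl(O(1/(\varepsilon\ln n))\bigr)^{1}$; enlarging $K$ by a constant factor $C$ only raises the exponent $t/M$ to $C$ while the base stays $\Theta(1/\ln n)$, so you obtain $(\ln n)^{-C}$, never $o(1/n)$. In particular your claim that ``Bennett yields a tail of the form $\exp(-c\ln n)$'' is a miscalculation: the exponent is $(t/M)\ln(tM/V)=\Theta(\ln\ln n)$, not $\Theta(\ln n)$.

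The fix—and this is precisely what the paper does—is to feed the \emph{pointwise} decay $\w(e_k)\le 2/(k+2)$ back into the second-moment sum, rather than using only the uniform bound $\w(e_k)\le M$ on the increments. Since the collision probability at step $k$ is $O(\Delta k/m)$, one gets
\[
V\ \le\ \sum_{k}\Bigl(\frac{2}{k+2}\Bigr)^{2}\frac{\Delta(k+1)}{m-\kappa}\ =\ O\!\Bigl(\frac{\ln\kappa}{m}\Bigr)\ =\ O\!\Bigl(\frac{\ln n}{n}\Bigr)\ =\ n^{-1+o(1)},
\]
a polynomial improvement over your $O(1/\ln n)$. With this $V$ and $M=2/(K+3)$, Freedman's bound becomes $\bigl(n^{-1+o(1)}\bigr)^{(\varepsilon-\alpha)(K+3)/2}$; since $K+3>2/\varepsilon$ forces the exponent strictly above $1$ for large $n$, the required $o(1/n)$ follows. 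Your dyadic fallback could in principle be pushed through, but it amounts to a disguised way of recovering this same variance computation, and the sketch you give does not yet control the lowest scale $\ell\approx\log_2(1/\varepsilon)$, where a single collision already contributes nearly $\varepsilon$.
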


\begin{proof}
Let $(\cG_k)_{k}$ be the natural filtration associated with the construction of $\cT$, i.e. $\cG_k=\sigma(f_1,\ldots,f_k)$. Note that $\kappa$ is a stopping time, that $e_{k+1}$ is $\cG_{k}-$measurable, and that the conditional law of $f_{k+1}$ given $\cG_{k}$ is uniform on the $m-k$ unmatched tails. Consequently,
\begin{eqnarray*}
\EE\left[W_{k+1}-W_{k}\bigg | \cG_{k}\right] & = & {\bf 1}_{k<\kappa}\w(e_{k+1})\frac{|\partial_-\cT^k|}{m-k},\\
\EE\left[(W_{k+1}-W_{k})^2\bigg | \cG_{k}\right] & = & {\bf 1}_{k<\kappa}\w(e_{k+1})^2\frac{|\partial_-\cT^k|}{m-k}.
\end{eqnarray*}
Using Lemma \ref{lm:wbound}, its corollary (\ref{bound:kappa}), and the crude bound $|\partial_-\cT^k|\leq \Delta (k+1)$, we arrive at
\begin{eqnarray}
\label{freedmanm}
\sum_{k=0}^{\kappa-1} \EE\left[W_{k+1}-W_{k}\bigg | \cG_{k}\right] & \leq & \frac{8\Delta/\wm }{m-2/\wm}=:\alpha,\\
\label{freedmanq}
\sum_{k=0}^{\kappa-1} \EE\left[(W_{k+1}-W_{k})^2\bigg | \cG_{k}\right] & \leq & \frac{4\Delta\ln(2e/\wm)}{m-2/\wm}=:\beta.\end{eqnarray}
where we have used $\sum_{k=0}^{\kappa-1}\frac 1{k+3}\leq 1+\ln\kappa$. In addition, Lemma \ref{lm:wbound} guarantees the a.-s. bound   
\begin{eqnarray}
\label{freedmanb}
0\ \leq & W_{k+1}-W_{k} & \leq \ \frac{2}{k+3}.\end{eqnarray}
In view of  (\ref{freedmanm}),(\ref{freedmanq}),(\ref{freedmanb}), the martingale version of Bennett's inequality due to Freedman \cite[Theorem 1.6]{freedman1975tail} ensures that for every $\varepsilon>\alpha,k\in\dN$, 
\begin{eqnarray*}
\PP\left(W_{\kappa}- W_{k} \geq \varepsilon\right) & \leq & \left(\frac{e(k+3)\beta}{2(\varepsilon-\alpha)}\right)^{\frac{(\varepsilon-\alpha)(k+3)}{2}}.
\end{eqnarray*}
Note that $\alpha$ and $\beta$ do not depend on $i$ and satisfy $\alpha=o(1)$ and $\beta=n^{-1+o(1)}$. Consequently, the right hand-side is $o\left(n^{-1}\right)$ as soon as $k+3> 2/\varepsilon$, and this concludes the proof.
\end{proof}

\section{The martingale approximation}
\label{sec:martingale}
In this section, we show that $\cB^-(\cI,h)$, the backward ball of radius $h$ around a uniform vertex $\cI$, can be accurately described by the first $h$ generations of a certain Galton-Watson tree $\cT_\star$, allowing us to approximate the sequence $\left(n\pi_{t}^{-}(\cI)\right)_{0\leq t\leq h}$ by a martingale defined directly on  $\cT_\star$. Specifically, let $\cT_\star$ be the infinite random tree with marks in $V$ obtained by the following branching process: 
\begin{itemize}
\item Generation $0$ consists of a single node (the root $o$), with mark uniformly distributed in $V$.
\item Inductively, every node $x$ in generation $t\in\dN$ independently gives birth to exactly $d_{i(x)}^-$ children, where $i(x)$ denotes the mark of $x$. These new nodes form generation $t+1$, and their marks are chosen independently from the out-degree distribution on $V$. 
\end{itemize}
Now, consider a node $x$ at generation $t\in\dN$, and let $(x_0,\ldots,x_t)$ be the unique path from $x$ to the root (thus, $x_0=x$ and $x_t=o$). Define the weight of $x$ as
\begin{eqnarray*}
\w(x)  :=  \frac{nd^-_{i(x_0)}}{m}\prod_{k=0}^{t-1}\frac{1}{d^+_{i(x_k)}}.
\end{eqnarray*}
In the case where $x=o$, we have $t=0$ and the empty product is interpreted  as equal to 1, that is
$$
\w(o):=\frac{n}m\,d_\cI^-,
$$ 
where $\cI$ is uniformly distributed over $V$.
Since 
$\EE[d^-_\cI]=\frac{m}n$ , we have $\EE[\w(o)]=1$. 
For $t\in\dN$, we now define $M_t$ as the total weight of the $t^{\textrm{th}}$ generation $\cT_\star^t$: 
\begin{eqnarray*}
M_t  :=  \sum_{x\in\cT_\star^t}\w(x).
\end{eqnarray*}
The law of the random variable $M_t$ provides a good approximation to the law of $n\pi_{t}^{-}(\cI)$, where $\cI$ denotes a uniformly chosen vertex, independent of the environment. 
\begin{proposition}[Branching process approximation]
\label{pr:bpa}
The total variation distance between the law of the random vector $(n\pi_{t}^{-}(\cI))_{0\leq t\leq h}$ and that of  $(M_t)_{0\leq t\leq h}$ is less than $\frac{\Delta^{2h+3}}{m}$. 
\end{proposition}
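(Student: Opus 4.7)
The plan is to construct the backward ball $\cB^-(\cI,h)$ around the uniform root $\cI\in V$ by a sequential head-exposure procedure that couples directly with the first $h$ generations of the Galton--Watson tree $\cT_\star$. Starting from $\cI$ (playing the role of the root $o$ with mark $i(o)=\cI$), I maintain a growing marked subtree $\cT$ of $G$ together with the list of currently unmatched heads of its vertices. At each step I pick such a head $f$ in breadth-first order from $\cI$ (stopping once every head at depth $<h$ has been matched), sample a uniformly random unmatched tail $e$, declare $\omega(e):=f$, and append the vertex of $e$ to $\cT$ as a new child of the vertex of $f$. Because a uniform bijection has uniform inverse, this backward version generates the correct uniform environment restricted to the arcs exposed so far.

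The next step is to couple this sequential sampling with an i.i.d.\ one in which each tail is drawn uniformly \emph{with} replacement from the full set $\bigcup_i E^+_i$, exactly as in Remark~\ref{rk:coupling} with the roles of heads and tails swapped. Under i.i.d.\ sampling, the vertex containing the drawn tail is distributed as $(d^+_j/m)_{j\in V}$ — precisely the mark law of a child in $\cT_\star$. So long as no drawn tail ever belongs to a vertex already alive in $\cT$, the coupling is faithful, the explored structure is a genuine directed tree, and it coincides as a \emph{weighted marked tree} with the first $h$ generations of $\cT_\star$: every vertex with mark $i$ has exactly $d^-_i$ children whose marks are i.i.d.\ from the out-degree distribution. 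By Remark~\ref{rk:coupling}, the total-variation cost of enforcing the absence of any such collision is at most $\Delta k^2/m$, where $k$ is the total number of arcs exposed.

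On this good event I would then verify that $n\pi_t(\cI)=M_t$ simultaneously for all $0\leq t\leq h$. The tree property forces any directed path of length $t\leq h$ in $G$ ending at $\cI$ to lie inside $\cT$ and to correspond bijectively to a generation-$t$ node $x\in\cT_\star^t$ with path of marks $i(x_0),i(x_1),\dots,i(x_t)=\cI$, whence
\begin{equation*}
n\pi_t(\cI)\ =\ \frac{n}{m}\sum_{j\in V}d^-_j\, P^t(j,\cI)\ =\ \sum_{x\in\cT_\star^t}\frac{n\, d^-_{i(x_0)}}{m}\prod_{k=0}^{t-1}\frac{1}{d^+_{i(x_k)}}\ =\ M_t.
\end{equation*}
Since the BFS construction exposes at most $\Delta+\Delta^2+\cdots+\Delta^h\leq \Delta^{h+1}$ arcs, the coupling cost is bounded by $\Delta\cdot\Delta^{2h+2}/m=\Delta^{2h+3}/m$, which is the asserted bound. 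No substantial obstacle arises: the only genuine probabilistic loss is the collision probability supplied by Remark~\ref{rk:coupling}, and the remainder is a bookkeeping exercise verifying that, on the collision-free event, the two weighted marked trees (and hence the two vectors) are identical.
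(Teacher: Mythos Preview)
Your proof is correct and follows essentially the same route as the paper: sequentially generate $\cB^-(\cI,h)$ by matching heads of tree vertices to uniform tails, invoke Remark~\ref{rk:coupling} (with roles of heads and tails swapped) to couple with i.i.d.\ out-degree sampling at total-variation cost $\Delta k^2/m\le \Delta^{2h+3}/m$, and observe that on the collision-free event the marked backward tree coincides with the first $h$ generations of $\cT_\star$, forcing $(n\pi_t(\cI))_{0\le t\le h}=(M_t)_{0\le t\le h}$. Your write-up is in fact slightly more explicit than the paper's in justifying this last identification.
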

\begin{proof}
We may  generate $\cB^-(\cI,h)$ sequentially as we did for the forward ball in subsection \ref{sec:treelike}, with directions reversed. It is now tails that get uniformly chosen from the remaining unmatched ones. Building on  remark \ref{rk:coupling}, we may instead choose uniformly from all tails, matched or not, until a tail gets chosen whose end-point was already in the ball. The chance that this event occurs before the end of the procedure is less than 
$p={\Delta k^2}/{m}$, where  $k=\Delta+\cdots+\Delta^h\leq \Delta^{h+1}$ is a crude upper-bound on the total number of steps. This creates a coupling between the ball $\cB^-(\cI,h)$ and the first $h$ generations of the tree $\cT_\star$, under which they  coincide with probability more than $1-p$.  Moreover, on this event, $(M_t)_{0\leq t\leq h}$ equals $(n\pi_{t}^{-}(\cI))_{0\leq t\leq h}$ by construction. 
\end{proof}
This connection to $(n\pi_{t}^{-}(\cI))_{0\leq t\leq h}$ motivates a deeper study of the process $(M_t)_{t\geq 0}$. 

\begin{proposition}[The martingale]
\label{pr:martingale}
$(M_t)_{t\geq 0}$ is a martingale relative to the natural filtration of the branching process. The limit $M_\star=\lim_{t\to\infty}M_t$ exists almost-surely and in $L^2$ and for all $t\in\dN$,
\begin{eqnarray*}
\EE\left[(M_\star-M_t)^2\right] & = & \frac{n(\gamma-1)\varrho^t}{m(1-\varrho)},
\end{eqnarray*}
where $\varrho,\gamma$ were defined in Theorem \ref{th:uniform}.
\end{proposition}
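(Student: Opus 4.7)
My first step would be to verify the martingale property directly from the branching structure. Let $\cF_t$ denote the $\sigma$-algebra generated by the first $t$ generations of $\cT_\star$ together with their marks. For any $y\in\cT_\star^t$, the node $y$ has exactly $d^-_{i(y)}$ children whose marks are i.i.d.\ samples from the out-degree distribution $i\mapsto d^+_i/m$, independently of $\cF_t$. Unfolding the definition of $\w$ gives $\w(x)=\w(y)\, d^-_{i(x)}/(d^-_{i(y)}d^+_{i(x)})$ for every child $x$ of $y$, and averaging yields
\begin{eqnarray*}
\EE\left[\sum_{x \text{ child of } y}\w(x)\,\Big|\,\cF_t\right] & = & \frac{\w(y)}{d^-_{i(y)}}\cdot d^-_{i(y)}\cdot\sum_{i\in V}\frac{d^+_i}{m}\cdot\frac{d^-_i}{d^+_i}\ =\ \w(y),
\end{eqnarray*}
since $\sum_i d^-_i=m$. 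Summing over $y\in\cT_\star^t$ gives $\EE[M_{t+1}\mid\cF_t]=M_t$.

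Next I would compute the one-step $L^2$ increment. Conditional independence across siblings, and across different parents, implies that $\VV(M_{t+1}\mid\cF_t)$ decomposes additively. Using the out-degree identities $\sum_i(d^+_i/m)(d^-_i/d^+_i)=1$ and $\sum_i(d^+_i/m)(d^-_i/d^+_i)^2=\gamma$, each parent $y$ contributes $\w(y)^2(\gamma-1)/d^-_{i(y)}$, whence
\begin{eqnarray*}
\EE\left[(M_{t+1}-M_t)^2\right] & = & (\gamma-1)\,S_t,\qquad S_t\;:=\;\EE\left[\sum_{y\in\cT_\star^t}\frac{\w(y)^2}{d^-_{i(y)}}\right].
\end{eqnarray*}

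The main calculation is to evaluate $S_t$ via a many-to-one formula. Iterating the branching construction, the expected number of length-$t$ paths from the root to a generation-$t$ node with root-to-leaf mark sequence $(j_t,\ldots,j_0)$ equals $\frac{1}{n}\prod_{k=1}^{t}\frac{d^-_{j_k}\,d^+_{j_{k-1}}}{m}$. Substituting $\w(y)^2/d^-_{i(y)}=(n^2 d^-_{j_0}/m^2)\prod_{k=0}^{t-1}(d^+_{j_k})^{-2}$ and rearranging, the sum over $(j_0,\ldots,j_t)$ factorizes into one factor $\frac{1}{m}\sum_{j_t}d^-_{j_t}=1$ together with $t$ independent copies of $\frac{1}{m}\sum_i d^-_i/d^+_i=\varrho$, yielding $S_t=(n/m)\varrho^t$ and $\EE[(M_{t+1}-M_t)^2]=n(\gamma-1)\varrho^t/m$.

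To conclude, $\delta\ge 2$ forces $\varrho\le 1/2<1$, so $\sum_t\EE[(M_{t+1}-M_t)^2]$ converges; the martingale $(M_t)$ is thus $L^2$-bounded and converges both almost surely (by Doob's theorem) and in $L^2$ to some limit $M_\star$. Orthogonality of martingale increments then gives $\EE[(M_\star-M_t)^2]=\sum_{s\ge t}\EE[(M_{s+1}-M_s)^2]=n(\gamma-1)\varrho^t/[m(1-\varrho)]$, as claimed. The one genuinely non-routine step is the many-to-one identity used to evaluate $S_t$: one must carefully track which node along a path contributes a $d^-$ factor and which contributes $d^+$ factors, so that the marginal sums align to yield the clean product $1\cdot\varrho^t$.
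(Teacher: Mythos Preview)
Your argument is correct and tracks the paper's proof closely: the verification of the martingale property, the conditional-variance computation yielding $(\gamma-1)\sum_{y\in\cT_\star^t}\w(y)^2/d^-_{i(y)}$, and the final $L^2$ summation via orthogonality of increments are identical in both. The only genuine difference lies in how the second moment $S_t=\EE\bigl[\sum_{y\in\cT_\star^t}\w(y)^2/d^-_{i(y)}\bigr]$ is evaluated. The paper proceeds recursively: it shows that $\Sigma_t:=(\gamma-1)\sum_{y}\w(y)^2/d^-_{i(y)}$ satisfies $\EE[\Sigma_{t+1}\mid\cF_t]=\varrho\,\Sigma_t$ (using the identity $\EE[d^-_{\cJ}/(d^+_{\cJ})^2]=\varrho$ for $\cJ$ with the out-degree law), so $\EE[\Sigma_t]=\varrho^t\,\EE[\Sigma_0]$, and then computes $\EE[\Sigma_0]=n(\gamma-1)/m$ directly from the uniform root mark. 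You instead unfold the whole branching structure via a many-to-one expansion over mark sequences $(j_t,\ldots,j_0)$ and factorize. Both routes are standard for second-moment computations in multitype branching processes; the paper's recursion is marginally cleaner (one conditional identity replaces your bookkeeping over which index contributes $d^-$ versus $d^+$), while your expansion makes the factorization $S_t=(n/m)\cdot 1\cdot\varrho^t$ fully explicit in a single step.
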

\begin{proof}
Write  $\to$ for the child to parent relation in $\cT_\star$. By definition, 
\begin{eqnarray*}
M_{t+1}-M_t & = & \sum_{x\in\cT_\star^t}\frac{\w(x)}{d^-_{i(x)}}\sum_{y\to x}\left(\frac{d^-_{i(y)}}{d^+_{i(y)}}-1\right).
\end{eqnarray*}
Given $\cF_t$, the random vector $(i(y))_{y\to x}$ consists of $d^-_{i(x)}$ i.i.d. samples from the out-degree distribution on $V$. Now, for a variable $\cJ$ with this distribution, we have
 \begin{eqnarray}\label{eqss}
\EE\left[\frac{d^-_\cJ}{d^+_\cJ}\right] \ = \ \sum_{i=1}^n\frac{d^+_i}{m}\frac{d^-_i}{d^+_i}\ = \ 1
& \textrm{ and } & \EE\left[\left(\frac{d^-_\cJ}{d^+_\cJ}\right)^2\right] \ = \ \sum_{i=1}^n\frac{d^+_i}{m}\left(\frac{d^-_i}{d^+_i}\right)^2 \ = \ \gamma.
\end{eqnarray}
Consequently, $\EE\left[\left. M_{t+1}-M_t\right|\cF_t\right]=0$ and 
\begin{eqnarray*}
\VV\left[\left. M_{t+1}-M_t\right|\cF_t\right] & = & (\gamma-1)\sum_{x\in\cT_\star^t}\frac{\left(\w(x)\right)^2}{d^-_{i(x)}}=:\Sigma_t.
\end{eqnarray*}
This shows that $(M_t)_{t\in\dN}$ is a (non-negative) martingale, and that its almost-sure limit $M_\star$ satisfies 
\begin{eqnarray}
\label{eq:bracket}
\EE\left[\left(M_{t}-M_\star\right)^2\right] & = & \sum_{k=t}^\infty \EE\left[\Sigma_k\right],
\end{eqnarray}
provided the right-hand side is finite. We now compute $\EE[\Sigma_t]$ for all $t\in\dN$. First note that
\begin{eqnarray*}
\Sigma_{t+1} & = & (\gamma-1)\sum_{x\in\cT_\star^t}\left(\frac{\w(x)}{d^-_{i(x)}}\right)^2\sum_{y\to x}\frac{d^-_{i(y)}}{\left(d^+_{i(y)}\right)^2}.
\end{eqnarray*}
As in \eqref{eqss}, 
\begin{eqnarray*}
\EE\left[\frac{d^-_{\cJ}}{\left(d^+_{\cJ}\right)^2}\right] & = & \sum_{i=1}^n\frac{d^+_i}m\frac{d^-_i}{(d^+_i)^2} \ = \  \varrho.
\end{eqnarray*}
Consequently, $\EE\left[\left. \Sigma_{t+1}\right|\cF_t\right]=\varrho \Sigma_t$. In particular, $\EE[\Sigma_t]=\EE[\Sigma_0]\varrho^t$ for all $t\in\dN$. But $\Sigma_0=\frac{n^2(\gamma-1) d_i^-}{m^2}$, where $i$ denotes the mark of the root. As the latter is uniformly distributed on $V$,  we deduce that $\EE[\Sigma_0] = \frac{n(\gamma-1)}{m}$, and inserting this into (\ref{eq:bracket}) completes the computation of $\EE[(M_\star-M_t)^2]$.
\end{proof}

Now that $M_\star$ is constructed, we may establish the representation announced in the introduction.

\begin{lemma}\label{repmstar}
The random variable $M_\star$ admits the representation (\ref{eq:mstar})-(\ref{rde}).  
\end{lemma}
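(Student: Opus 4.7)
The plan is to exploit the branching-process self-similarity of $\cT_\star$ at the root, together with the $L^2$ convergence of $(M_t)$ established in Proposition \ref{pr:martingale}. For each $j\in V$, let $Y^{(j)}$ denote the almost-sure limit of $(M_t)$ when the root of $\cT_\star$ is conditioned to carry mark $j$ in place of a uniform mark. Proposition \ref{pr:martingale} applies verbatim to this variant, yielding $Y^{(j)}\in L^2$ with $\EE[Y^{(j)}]=n d_j^-/m$.

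First I would condition on the root's mark $\cI$ and on the marks $\cJ_1,\ldots,\cJ_{d_\cI^-}$ of its children, which, given $\cI$, are i.i.d.\ samples from the out-degree distribution on $V$. The subtrees rooted at the children are conditionally independent copies of the branching process with these prescribed root marks. Unwinding the definition of $\w(x)$, every generation-$t$ node below the $k$-th child contributes to $M_t$ its weight in the standalone subtree multiplied by the extra factor $1/d^+_{\cJ_k}$ coming from the root's out-degree, so that for $t\geq 1$,
\begin{equation*}
M_t \;=\; \sum_{k=1}^{d_\cI^-}\frac{\tilde M^{(k)}_{t-1}}{d^+_{\cJ_k}},
\end{equation*}
where $\tilde M^{(k)}_{s}$ denotes the $s$-th generation total weight of the $k$-th subtree. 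Letting $t\to\infty$, the finite sum of almost-sure limits yields $M_\star=\sum_{k=1}^{d_\cI^-} Y_k/d^+_{\cJ_k}$, with each $Y_k$ distributed as $Y^{(\cJ_k)}$ conditionally on $\cJ_k$, and the pairs $(\cJ_k,Y_k)$ conditionally independent given $\cI$.

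I would then define $Z_k:=(m/n)\cdot Y_k/d^+_{\cJ_k}$. By construction, $M_\star=(n/m)\sum_{k=1}^{d_\cI^-}Z_k$, matching (\ref{eq:mstar}). The conditional joint law of $(\cJ_k,Y_k)_{k\geq 1}$ given $\cI$ does not depend on $\cI$, so the $Z_k$ are genuinely i.i.d.\ and independent of $\cI$, and the martingale identity $\EE[Y^{(j)}]=n d_j^-/m$ gives
\begin{equation*}
\EE[Z_1] \;=\; \EE\!\left[\frac{d^-_\cJ}{d^+_\cJ}\right] \;=\; \frac{1}{m}\sum_{j=1}^n d^-_j \;=\; 1.
\end{equation*}
Finally, applying the very same decomposition to $Y^{(j)}$ for an arbitrary $j\in V$ and then evaluating at $j=\cJ$, I obtain
\begin{equation*}
Z_1 \;\stackrel{d}{=}\; \frac{m}{n}\cdot\frac{Y^{(\cJ)}}{d^+_\cJ} \;=\; \frac{1}{d^+_\cJ}\sum_{k=1}^{d^-_\cJ} Z'_k,
\end{equation*}
where $(Z'_k)_{k\geq 1}$ are i.i.d.\ copies of $Z_1$, independent of $\cJ$. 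This is the fixed-point identity (\ref{rde}).

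The only step that requires genuine care is the passage to the limit in the root-decomposition of $M_t$: one must justify exchanging the finite sum over children with $t\to\infty$ so that the identity $M_\star = \sum_k Y_k/d^+_{\cJ_k}$ holds almost surely, rather than merely in distribution. Since $d_\cI^-\leq\Delta$ is bounded and each subtree martingale $\tilde M^{(k)}_s$ converges almost surely as $s\to\infty$ by Proposition \ref{pr:martingale}, this exchange is immediate. The remainder of the argument is routine bookkeeping with the weight formula.
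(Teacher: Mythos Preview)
Your argument is correct and follows essentially the same route as the paper: both exploit the recursive self-similarity of $\cT_\star$ at the root, pass to the almost-sure limit along each subtree, and then read off (\ref{eq:mstar}) and (\ref{rde}) from the resulting decomposition. The only cosmetic difference is that the paper defines the subtree variables $Z_t(x)$ directly on the tree (with a normalization absorbing the factor $m/(n d^+_{i(x)})$) and derives the identities almost surely before passing to laws, whereas you phrase things via the conditioned limits $Y^{(j)}$ and work distributionally; your variables $Z_k=(m/n)Y_k/d^+_{\cJ_k}$ coincide with the paper's $Z_\star$ evaluated at the children of the root.
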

\begin{proof}
Let $\cT_\star^t(x)$ denote the set of all nodes $y$ from which there is a child-to-parent path of length $t$ to the node $x$ in $\cT_\star$. Writing $x_0,\ldots,x_t$ for the corresponding path, we define 
\begin{eqnarray*}
\w(y,x) & := &d^-_{i(x_0)}\prod_{k=0}^{t}\frac{1}{d^+_{i(x_k)}}.
\end{eqnarray*}
Now, for every $t\in\dN$ and every node $x$ in the tree $\cT_\star$, we define the quantity
\begin{eqnarray*}
Z_t(x) & := & \sum_{y\in\cT_\star^t(x)}\w(y, x).
\end{eqnarray*}
A martingale argument similar to the one above shows that the limit $Z_\star(x)=\lim_{t\to\infty}Z_t(x)$ exists almost-surely. By construction, we have for all $t\geq 0$, 
\begin{eqnarray*}
M_{t+1} \ = \ \frac{n}{m}\sum_{x\to o} Z_{t}(x) & \textrm{ and } & Z_{t+1}(x) \ = \ \frac{1}{d^+_{i(x)}} \sum_{y\to x}Z_{t}(y).
\end{eqnarray*}
Passing to the limit, we deduce that almost-surely
\begin{eqnarray*}
M_{\star} \ = \ \frac{n}{m}\sum_{x\to o} Z_{\star}(x) & \textrm{ and } & Z_{\star}(x) \ = \ \frac{1}{d^+_{i(x)}} \sum_{y\to x}Z_{\star}(y).
\end{eqnarray*}
The first equation is precisely (\ref{eq:mstar}), since $i(o)$ is uniformly distributed on $V$ and  conditionally on $\cF_0$, the random vector $(Z_{\star}(x),x\to o)$ consists of $d^-_{i(o)}$ i.i.d. coordinates. The second equation implies (\ref{rde}) since conditionally on $\cF_0$, each $x\sim o$ satisfies the following: $i(x)$ has the out-degree distribution and conditionnally on  $\cF_1$, the random vector $(Z_{\star}(y),y\to x)$ consists of $d^-_{i(x)}$ i.i.d. coordinates whose marginal distribution is the same as $Z_\star(x)$.
\end{proof}

\section{Proof of Theorem \ref{th:uniform}}
\label{sec:uniform}
Thanks to (\ref{eq:proxy}), the proof of Theorem \ref{th:uniform} boils down to establishing that for any fixed $t\in\dN$, \begin{eqnarray*}
2\|{\pi_{h}^{-}}-\pi_{t}^{-}\|_{\textsc{tv}} & \leq & \sqrt{\frac{n\left(\gamma-1\right) \varrho^t}{m(1-\varrho)}}+o_\PP(1).
\end{eqnarray*}
We first prove that the left-hand side is concentrated around its expectation.
\begin{lemma}
\label{lm:concentration}
For any fixed $t\in\dN$, 
\begin{eqnarray*}
\|{\pi_{h}^{-}}-\pi_{t}^{-}\|_{\textsc{tv}} & = &\EE\left[\|{\pi_{h}^{-}}-\pi_{t}^{-}\|_{\textsc{tv}}\right]+o_\PP(1).
\end{eqnarray*}
\end{lemma}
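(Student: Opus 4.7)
The plan is to establish concentration of $F(\omega):=\|\pi_h-\pi_t\|_{\textsc{tv}}$ about its mean via a bounded-differences inequality on the uniformly random matching $\omega$. Namely, if $|F(\omega)-F(\omega')|\leq c_n$ whenever $\omega,\omega'$ differ by a single transposition of two matches, then a Doob-martingale argument along the sequential generation of $\omega$ (coupling, at each step, two conditional completions given $\cF_k:=\sigma(\omega(e_1),\dots,\omega(e_k))$ so that they differ by a single swap) yields an Azuma--type bound $\PP(|F-\EE[F]|\geq\varepsilon)\leq 2\exp(-\varepsilon^2/(Cmc_n^2))$. The task thus reduces to producing $c_n$ with $mc_n^2=o(1)$.

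Suppose $\omega,\omega'$ differ by a single swap, involving tails carried by vertices $i_1$ and $i_2$. The corresponding transition matrices $P,P'$ agree outside these two rows, and within them $\|P(i,\cdot)-P'(i,\cdot)\|_{\textsc{tv}}\leq 1/\delta$. Using the telescoping identity
\begin{eqnarray*}
\pi_s-\pi_s' & = & \sum_{k=0}^{s-1}(\pi_0 P^k)(P-P')(P')^{s-1-k},
\end{eqnarray*}
together with the fact that stochastic matrices are $\ell^1$-contractions, I would deduce
\begin{eqnarray*}
\|\pi_s-\pi_s'\|_1 & \leq & \frac{2}{\delta}\sum_{k=0}^{s-1}\bigl(\pi_0 P^k(i_1)+\pi_0 P^k(i_2)\bigr).
\end{eqnarray*}
The crude bound $\sum_i P(i,j)\leq\Delta/\delta$ (each of the $d_j^-\leq\Delta$ arcs into $j$ contributes at most $1/\delta$) iterates to $\pi_0 P^k(i)\leq(\Delta/\delta)^k\Delta/m$ uniformly in $i$ and $k$. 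Since (\ref{def:h}) yields $(\Delta/\delta)^h\leq\Delta^h\leq n^{1/10}$, applying the display with $s\in\{h,t\}$ furnishes $|F(\omega)-F(\omega')|\leq C\,h\,n^{-9/10}$ for some $C=C(\Delta,\delta)$.

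Choosing $c_n:=Ch\,n^{-9/10}$ therefore gives $mc_n^2=O(h^2n^{-4/5})=o(1)$, since $h=O(\ln n)$ and $m=\Theta(n)$; the bounded-differences bound announced above then completes the proof for any fixed $\varepsilon>0$.

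The step I expect to require the most care is the swap-based coupling underlying the martingale-difference bound: one must verify that two conditional completions of a uniformly random matching given a prefix $\cF_k$ can indeed be coupled to differ by exactly one transposition. This is a standard consequence of the invariance of the uniform distribution on matchings under swaps, but it deserves an explicit justification (or a citation to a transposition-based McDiarmid inequality for uniformly random matchings). The swap-oscillation estimate itself, by contrast, reduces to the routine telescoping identity combined with the crude row-sum bound on $P$ noted above.
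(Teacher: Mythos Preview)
Your proposal is correct and follows essentially the same route as the paper: bound the swap oscillation of $\omega\mapsto\|\pi_h-\pi_t\|_{\textsc{tv}}$ and invoke the Azuma--Hoeffding inequality for functions of a uniformly random matching. The only difference is cosmetic: the paper bounds the swap effect on $\pi_s$ directly by observing that the probability the walk of length $s$ from $\pi_0$ traverses a given tail is at most the $\pi_0$-mass of the backward ball of radius $s$ around its vertex (hence $\leq\Delta^{s+2}/m$), whereas you reach a comparable bound via the telescoping identity and the column-sum estimate $\sum_i P(i,j)\leq\Delta/\delta$; both yield $mc_n^2=o(1)$ and the same conclusion.
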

\begin{proof}
Fix the environment. The probability that the random walk of length $t$ starting from the in-degree distribution traverses a particular tail is trivially upper-bounded by the $\pi_{0}^{-}-$measure of the backward ball of radius $t$ around its vertex, which is at most $\frac{\Delta^{t+2}}{m}$. Consequently, swapping two coordinates of the environment $\omega$ cannot alter the distribution $\pi_{t}^{-}$ by more than $\frac{2\Delta^{t+2}}{m}$ in total variation. By the triangle inequality, we conclude that it cannot alter the quantity $Z=\|\pi_{t}^{-}-{\pi_{h}^{-}}\|_{\textsc{tv}}$ by more than $b=\frac{2\Delta^{t+2}+2\Delta^{h+2}}{m}$. By a now classical application of the Azuma--Hoeffding inequality (see, e.g., \cite[Section 3.2.]{MR1678578}), this property implies that under the uniform measure, the random variable $\omega\mapsto Z(\omega)$ satisfies the following concentration inequality:
\begin{eqnarray}
\label{eq:azuma}
\PP\left(|Z- \EE[Z]|\geq\varepsilon\right) & \leq & 2\exp\left(-\frac{\varepsilon^2}{2mb^2}\right).
\end{eqnarray}
We may now let $n\to\infty$. With $t$ fixed and $h$ as in (\ref{def:h}), we have $mb^2\to 0$, as desired.
\end{proof}
It only remains to bound the expectation. First, we may rewrite it as
\begin{eqnarray*}
2\EE\left[\|\pi_{t}^{-}-{\pi_{h}^{-}}\|_{\textsc{tv}}\right] & = &  \EE\left[\left|n\pi_{t}^{-}(\cI)-n{\pi_{h}^{-}}(\cI)\right|\right],
\end{eqnarray*}
where $\cI$ is a uniformly chosen vertex, independent of the environment, 
and the expectation extends over both $\omega$ and $\cI$. Now, by Proposition \ref{pr:bpa}, the total variation distance between the law of the random vector $(n\pi_{t}^{-}(\cI))_{0\leq t\leq h}$ and that of  $(M_t)_{0\leq t\leq h}$ is less than $p=\frac{\Delta^{2h+3}}{m}$. Since all coordinates  are crudely bounded by $\Delta^{h+1}$, we deduce  that
\begin{eqnarray*}
\left|\EE\left[\left|n\pi_{t}^{-}(\cI)-n{\pi_{h}^{-}}(\cI)\right|\right] - \EE\left[\left|M_t-M_h\right|\right]\right|& \leq & p\Delta^{h+1}\ = \ o(1).
\end{eqnarray*}
Finally, the Cauchy--Schwarz inequality and the orthogonality of martingale increments yield
\begin{eqnarray*}
\EE\left[|M_t-M_h|\right]^2 & \leq & \EE\left[\left(M_t-M_h\right)^2\right] \\
& \leq & \EE\left[\left(M_t-M_\star\right)^2\right],
\end{eqnarray*}
and Proposition \ref{pr:martingale} concludes the proof.

\section{Proof of Theorem \ref{th:measure}}
\label{sec:measure}
We first establish the following (weaker) result.
\begin{lemma}Let $f\colon\dR\to\dR$ be non-expansive, i.e. $|f(x)-f(y)|\leq |x-y|$ for all $x,y\in\dR$. Then,
\begin{eqnarray*}
\left|\frac 1n\sum_{i=1}^nf\left(n\pi_\star(i)\right)-\EE[f(M_\star)]\right| & \xrightarrow[n\to\infty]{\PP} & 0.
\end{eqnarray*}
\end{lemma}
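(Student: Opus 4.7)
The plan is to approximate the unknown equilibrium measure $\pi_\star$ by the proxy $\pi_h=\pi_0 P^h$ and to approximate the empirical average by the branching-process expectation $\EE[f(M_h)]$, then let $h\to\infty$. Writing $T_\star:=\frac{1}{n}\sum_i f(n\pi_\star(i))$ and $T_h:=\frac{1}{n}\sum_i f(n\pi_h(i))$, the triangle inequality splits the target into three pieces:
\begin{equation*}
\left|T_\star-\EE[f(M_\star)]\right|\ \leq\ |T_\star-T_h|\ +\ |T_h-\EE[f(M_h)]|\ +\ |\EE[f(M_h)]-\EE[f(M_\star)]|.
\end{equation*}

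I would dispatch the first and third pieces using ingredients already in the paper. For the first, the non-expansion of $f$ gives $|T_\star-T_h|\leq \frac{1}{n}\sum_i|n\pi_\star(i)-n\pi_h(i)| = 2\|\pi_\star-\pi_h\|_{\textsc{tv}}$, which is $o_\PP(1)$ by \eqref{eq:proxy}. For the third, non-expansion combined with the Cauchy--Schwarz inequality reduces the task to estimating $\EE[(M_h-M_\star)^2]$, and Proposition \ref{pr:martingale} bounds this by a constant times $\varrho^h$, which vanishes since $h\to\infty$ and $\varrho<1$.

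The middle piece is where the main work lies, and I would split it further into a concentration step and an annealed comparison step. For concentration, swapping two coordinates of the environment $\omega$ perturbs $\pi_h$ by at most $O(\Delta^{h+2}/m)$ in total variation, exactly as in the proof of Lemma \ref{lm:concentration}, so by non-expansion the same bound applies to $T_h$. The Azuma--Hoeffding inequality over $m$ such swaps then yields $|T_h-\EE T_h|=o_\PP(1)$, since the ratio $m/\Delta^{2h}$ diverges under the choice \eqref{def:h} of $h$. For the annealed comparison, I rewrite $\EE[T_h]=\EE[f(n\pi_h(\cI))]$ with $\cI$ uniform on $V$ and independent of $\omega$, and invoke the coupling of Proposition \ref{pr:bpa}, under which $n\pi_h(\cI)=M_h$ outside a failure event of probability $p=O(\Delta^{2h+3}/m)=o(1)$.

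The main obstacle here is that $f$ is not assumed bounded, so one cannot simply close the argument by multiplying $p$ by $\|f\|_\infty$. My way around it is to use the non-expansion bound $|f(x)|\leq |f(0)|+|x|$ and apply Cauchy--Schwarz on the failure event, which reduces the issue to controlling the second moments $\EE[(n\pi_h(\cI))^2]$ and $\EE[M_h^2]$. The former equals $n\cdot\EE\sum_i\pi_h(i)^2$, already shown to be $O(\ln^2 n)$ via the two-walks calculation carried out in Section \ref{sec:lower}; the latter is $O(1)$ as a direct consequence of Proposition \ref{pr:martingale} and $\EE M_h=1$. Since $\sqrt p$ decays polynomially in $n$ with the choice \eqref{def:h} while these moments grow only polylogarithmically, the product still vanishes, which closes the middle piece and the lemma.
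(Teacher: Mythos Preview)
Your decomposition and the handling of all four pieces match the paper's proof essentially step for step: replace $\pi_\star$ by $\pi_h$ via \eqref{eq:proxy}, concentrate $T_h$ by Azuma--Hoeffding using the swap bound from Lemma~\ref{lm:concentration}, compare $\EE[T_h]=\EE[f(n\pi_h(\cI))]$ to $\EE[f(M_h)]$ via the coupling of Proposition~\ref{pr:bpa}, and finally pass from $M_h$ to $M_\star$ using Proposition~\ref{pr:martingale}.

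The only place you diverge is the coupling step, where you treat the unboundedness of $f$ as an obstacle and resolve it with Cauchy--Schwarz and second-moment bounds on $n\pi_h(\cI)$ and $M_h$. This works, but it is unnecessary: both random variables are \emph{deterministically} bounded by $\Delta^{h+1}$ (each is a sum of at most $\Delta^h$ terms, each term at most $n\Delta/m\leq\Delta$), so by non-expansiveness $|f(n\pi_h(\cI))-f(M_h)|\leq\Delta^{h+1}$ always, and the coupling error is simply $p\Delta^{h+1}=\Delta^{3h+4}/m=o(1)$. The paper uses this direct bound, which spares the detour through Section~\ref{sec:lower}.
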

\begin{proof}
The assumption on $f$ ensures that for any probability measures $\pi,\pi'$ on $V$,
\begin{eqnarray}
\label{eq:nonexp}
\left|\frac 1n\sum_{i=1}^nf\left(n\pi(i)\right)-\frac 1n\sum_{i=1}^nf\left(n\pi'(i)\right)\right| & \leq & 2\|\pi-\pi'\|_{\textsc{tv}}.
\end{eqnarray}
Choosing $\pi=\pi_\star$, $\pi'={\pi_{h}^{-}}$ and invoking (\ref{eq:proxy}), we see that 
\begin{eqnarray*}
 \frac 1n\sum_{i=1}^nf\left(n\pi_\star(i)\right) & = &  \frac 1n\sum_{i=1}^nf\left(n{\pi_{h}^{-}}(i)\right)+o_\PP(1).
\end{eqnarray*}
Now, recall from the proof of Lemma \ref{lm:concentration} that swapping two coordinates of the environment cannot alter ${\pi_{h}^{-}}$ by more than $\frac{2\Delta^{h+2}}{m}$ in total variation. In view of (\ref{eq:nonexp}), we deduce that a swap cannot alter the variable $Z=\frac 1n\sum_{i=1}^nf\left(n{\pi_{h}^{-}}(i)\right)$ by more than $b=\frac{4\Delta^{h+2}}{m}$. Since $mb^2\to 0$, the concentration inequality (\ref{eq:azuma}) implies
\begin{eqnarray*}
\frac 1n\sum_{i=1}^nf\left(n{\pi_{h}^{-}}(i)\right) & = & \EE\left[\frac 1n\sum_{i=1}^nf\left(n{\pi_{h}^{-}}(i)\right)\right]+o_\PP(1).
\end{eqnarray*}
Observe that the expectation on the right-hand side is $\EE\left[f\left(n{\pi_{h}^{-}}(\cI)\right)\right]$, where $\cI$ is a uniform  vertex, independent of the environment. Proposition \ref{pr:bpa} provides us with a coupling under which the random variables $n{\pi_{h}^{-}}(\cI)$ and $M_h$ differ with probability less than $p=\frac{\Delta^{2h+3}}{m}$. Since both are bounded by $\Delta^{h+1}$ and since $f$ is non-expansive, we obtain
\begin{eqnarray*}
\left|\EE\left[\frac 1n\sum_{i=1}^nf\left(n{\pi_{h}^{-}}(i)\right)\right]-\EE\left[f\left(M_h\right)\right]\right| 
& \leq & p{\Delta^{h+1}}\ = \ o(1).
\end{eqnarray*}
Finally, by the non-expansiveness of $f$, the Cauchy--Schwarz inequality, and Proposition \ref{pr:martingale},
\begin{eqnarray*}
\left|\EE\left[f\left(M_h\right)\right]-\EE\left[f\left(M_\star\right)\right]\right| 
& \leq & \EE\left[\left|M_h-M_\star\right|\right]\\
& \leq & \sqrt{\EE\left[\left(M_h-M_\star\right)^2\right]}\\
& \leq & \sqrt{\frac{n(\gamma-1)\varrho^h}{m(1-\varrho)}}\ = \  o(1).
\end{eqnarray*}
Combining those four inequalities concludes  the proof.
\end{proof}
To conclude the proof of Theorem \ref{th:measure}, 
we observe that $M_\star$ has the desired representation from Lemma \ref{repmstar}. Moreover, $M_\star$ is bounded in $L^2$ uniformly in $n$ by Proposition \ref{pr:martingale}, and hence uniformly integrable as $n$ varies. We may then apply the following general result. Write $\cP_1(\dR)$ for the (Polish) space of Borel probability measures on $\dR$ with finite first absolute moment, equipped with the Wasserstein distance $\cW$. 
\begin{lemma}[Convergence of random measures in $\cP_1(\dR)$]Let $(\cL_n)_{n\geq 1}$ be a sequence of random elements of  $\cP_1(\dR)$, and let $(\mathbb L_n)_{n\geq 1}$ be a uniformly integrable sequence of deterministic elements of   $\cP_1(\dR)$. Assume that  for every non-expansive function $f\colon\dR\to\dR$, 
\begin{eqnarray}
\label{start}
\left|\int_\dR f{\D} \cL_n-\int_\dR f{\D} {\mathbb L}_n\right| & \xrightarrow[n\to\infty]{\PP} & 0.
\end{eqnarray}
Then  $\cW\left(\cL_n, {\mathbb L}_n\right)  \xrightarrow[n\to\infty]{\PP} 0$.
\end{lemma}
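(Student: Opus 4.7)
The plan is to upgrade the pointwise hypothesis \eqref{start} (valid for each individual non-expansive test function) to the supremum defining $\cW$, via a classical two-step reduction: first truncate the tails using uniform integrability, then use Arzel\`a--Ascoli to replace the sup over a compact family by the sup over a finite net.

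First I would control the tails. Applying \eqref{start} to the particular non-expansive function $f_R(x):=(|x|-R)_+$ yields $\int f_R\,\D\cL_n=\int f_R\,\D{\mathbb L}_n+o_\PP(1)$. By uniform integrability of $({\mathbb L}_n)$, for every $\varepsilon>0$ there exists $R=R(\varepsilon)$ with $\sup_n\int f_R\,\D{\mathbb L}_n\le \varepsilon$, hence $\PP\bigl(\int f_R\,\D\cL_n>2\varepsilon\bigr)\to 0$. This is the key step that transfers tightness from the deterministic sequence $({\mathbb L}_n)$ to the random measures $(\cL_n)$.

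Next I would truncate the test functions themselves. Without loss of generality we may restrict the supremum defining $\cW$ to $f$ with $f(0)=0$ (constants cancel), and with the clamping map $T_R(x):=(-R)\vee(x\wedge R)$ the composition $f\circ T_R$ is again non-expansive on $\dR$ and satisfies $|f-f\circ T_R|\le f_R$ pointwise. Hence
$$\left|\int f\,\D(\cL_n-{\mathbb L}_n)\right|\le \left|\int f\circ T_R\,\D(\cL_n-{\mathbb L}_n)\right|+\int f_R\,\D\cL_n+\int f_R\,\D{\mathbb L}_n.$$
Taking supremum over $f$, everything reduces to a supremum over the class $\cF_R:=\{g\colon[-R,R]\to\dR\,:\, g\text{ is 1-Lipschitz with }g(0)=0\}$, which is uniformly bounded by $R$ and equicontinuous, hence relatively compact in $C([-R,R])$ by Arzel\`a--Ascoli. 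For $\eta>0$ I pick a finite $\eta$-net $g_1,\dots,g_N$ of $\cF_R$. For every $g\in\cF_R$ and every probability measure $\mu$, picking $g_i$ with $\|g-g_i\|_\infty\le \eta$ yields $|\int g\circ T_R\,\D\mu-\int g_i\circ T_R\,\D\mu|\le \eta$, so
$$\cW(\cL_n,{\mathbb L}_n)\le 2\eta + \max_{1\le i\le N}\left|\int g_i\circ T_R\,\D(\cL_n-{\mathbb L}_n)\right|+\int f_R\,\D\cL_n+\int f_R\,\D{\mathbb L}_n.$$

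Each $g_i\circ T_R$ is a fixed non-expansive function on $\dR$, so \eqref{start} applied to each of the finitely many $g_i\circ T_R$ forces the maximum on the right to tend to $0$ in probability. Given $\varepsilon>0$, choosing $R=R(\varepsilon)$ first and then $\eta=\varepsilon$, each of the four terms above is at most $\varepsilon$ with probability tending to $1$, so $\PP(\cW(\cL_n,{\mathbb L}_n)>4\varepsilon)\to 0$, and $\varepsilon$ is arbitrary. The only delicate step is the tail control: without the uniform integrability of $({\mathbb L}_n)$, there would be no way to bound the contribution of unbounded $1$-Lipschitz test functions uniformly in $n$; everything else is the routine $\varepsilon$-net argument that upgrades finite-dimensional convergence to a supremum over a compact class.
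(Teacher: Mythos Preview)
Your argument is correct (up to a harmless constant in the final line; the four terms actually total at most $5\varepsilon$ rather than $4\varepsilon$, but this is immaterial). It is, however, a genuinely different route from the paper's.

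The paper proceeds abstractly: it invokes the subsequence characterisation of convergence in probability, uses uniform integrability of $(\mathbb L_n)$ as relative compactness in $\cP_1(\dR)$ to extract a limit point $\mathbb L$ along a subsequence, applies the hypothesis to a fixed countable determining family of non-expansive functions, and then uses diagonal extraction to get almost-sure convergence along a further subsequence simultaneously for the whole family. Your approach is direct and quantitative: you use uniform integrability not to extract a limit but to truncate tails via the single test function $f_R=(|x|-R)_+$, then compactify the class of test functions by Arzel\`a--Ascoli and pass to a finite $\eta$-net. Your argument is more elementary, avoids subsequences entirely, and yields an explicit bound $\cW(\cL_n,\mathbb L_n)\le 2\eta+\max_i|\cdots|+\int f_R\,\D\cL_n+\int f_R\,\D\mathbb L_n$ that could in principle be made effective; the paper's soft compactness argument is shorter to state but gives no rate.
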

\begin{proof}
It is classical that convergence in $\cP_1(\dR)$ can be tested on a certain countable family $\cF$ of non-expansive functions, in the following sense:
\begin{eqnarray}
\label{countable}
\cW(\dL_n,\dL)\to 0 & \Longleftrightarrow & \left(\forall f\in\cF,\ \int_\dR f{\D}\dL_n\to\int_{\dR} f{\D}\dL\right).
\end{eqnarray}
One may take, for example, the function $x\mapsto |x|$ together with the functions $x\mapsto \frac{1-e^{-\theta [x]_+}}\theta$ and $x\mapsto \frac{1-e^{-\theta [x]_-}}\theta$  for  positive rational numbers $\theta$, since convergence in $\cP_1(\dR)$ is equivalent to weak convergence together with convergence of the first absolute moment. We also recall the following standard result: a sequence of real-valued random variables $(X_n)_{n\geq 1}$ converges in probability to $0$ if and only if from every subsequence $(X_{a(n)})_{n\geq 1}$, one can further extract a subsubsequence $(X_{a(b(n))})_{n\geq 1}$ converging to $0$ almost-surely. Let us apply this to $X_n=\cW(\cL_n,\dL_n)$: given an extractor $a$ (i.e., an increasing function from $\dN$ to $\dN$), we will construct an extractor $b$ such that 
 \begin{eqnarray}
 \label{aim}
 \cW(\cL_{a(b(n))},\dL_{a(b(n))}) & \xrightarrow[n\to\infty]{\rm{a.-s.}} & 0.
 \end{eqnarray}
First recall that uniform integrability means relative compactness in $\cP_1(\dR)$: thus, the assumption ensures that there exists  $\dL\in\cP_1(\dR)$ and an extractor $c$ such that 
 \begin{eqnarray}
 \label{compactness}
\cW(\dL_{a(c(n))},\dL) & \xrightarrow[n\to\infty]{} & 0
 \end{eqnarray}
Combining this with  assumption (\ref {start}), we see that for each non-expansive function  $f\colon\dR\to\dR$,
\begin{eqnarray*}
\left|\int_\dR f{\D} \cL_{a(c(n))}-\int_\dR f{\D} {\mathbb L}\right| & \xrightarrow[n\to\infty]{\PP} & 0.
\end{eqnarray*}
 In particular, for a fixed $f$, there is an extractor $d$ such that 
\begin{eqnarray}
\label{extractor}
\left|\int_\dR f{\D} \cL_{a(c(d(n)))}-\int_\dR f{\D} {\mathbb L}\right| & \xrightarrow[n\to\infty]{\rm{a.-s.}} & 0.
\end{eqnarray}
The extractor $d$ depends on $f$, but by diagonal extraction one may construct one that satisfies (\ref{extractor}) simultaneously for all $f$ in the countable family $\cF$.  In view of  (\ref{countable}), we conclude that
\begin{eqnarray*}
\cW\left(\cL_{a(c(d(n)))}, {\mathbb L}\right) & \xrightarrow[n\to\infty]{\rm{a.-s.}} & 0.
\end{eqnarray*}
Recalling the  convergence (\ref{compactness}), we see that the extractor $b(n):=c(d(n))$ satisfies (\ref{aim}). 
\end{proof}
 \bibliographystyle{abbrv}
\bibliography{rwrd}

\end{document}